\theoremstyle{prop}
\newtheorem{prop}[theorem]{Proposition}
\theoremstyle{assumption}
\newtheorem{assumption}[theorem]{Assumption}
\DeclareMathOperator*{\essup}{ess-sup}
\newcommand{\Bf}{\overline \Phi}
\newcommand{\Bi}{\overline I}
\newcommand{\bu}{\overline u}
\newcommand{\ps}{p^{\star}}
\newcommand{\de}{\delta^{\epsilon}}
\newcommand{\dd}{\delta_1^{\epsilon}}
\newcommand{\eS}{S^{d}}
\newcommand{\pS}{S^{d,+}}
\newcommand{\Ea}{E_{\rm {ad}}}
\newcommand{\bbR}{\mathbb R}
\newcommand{\bbP}{\mathbb P}
\newcommand{\bbE}{\mathbb E}
\newcommand{\bP}{\mathbb P}
\newcommand{\bbZ}{\mathbb Z}
\newcommand{\bbT}{{\mathbb T}}
\newcommand{\ve}{v^{\epsilon}}
\newcommand{\vo}{v_{0}}
\newcommand{\po}{p_{0}}
\newcommand{\pe}{p^{\epsilon}}
\newcommand{\pea}{\pe_{\rm a}}
\newcommand{\Ke}{K^{\epsilon}}
\newcommand{\xe}{x^{\epsilon}}
\newcommand{\xo}{x_{0}}
\newcommand{\Kb}{{K_0}}
\newcommand{\ue}{u^{\epsilon}}
\newcommand{\uo}{u_0}
\newcommand{\eps}{\epsilon}
\newcommand{\Ex}{{\mathbb E}}
\newcommand{\T}{{\mathbb T}}
\newcommand{\R}{{\mathbb R  }}
\newcommand{\cL}{\mathcal L}
\newcommand{\be}{\begin{equation}}
\newcommand{\ene}{\end{equation}}
\newcommand{\cG}{{\cal G}}
\newcommand{\cC}{{\cal C}}
\newcommand{\cF}{{\cal F}}
\newcommand{\cFn}{{\cal F}_{\eta}}
\newcommand{\xt}{x_{\rm init}}
\newcommand{\commentout}[1]{}
\definecolor{darkred}{rgb}{.7,0,0}
\definecolor{green}{rgb}{0,0.7,0}
\begin{document}

\title*{Multiscale Modelling and Inverse Problems}
% Use \titlerunning{Short Title} for an abbreviated version of
% your contribution title if the original one is too long    
\author{J. Nolen, G.A. Pavliotis and A.M. Stuart}
% Use \authorrunning{Short Title} for an abbreviated version of
% your contribution title if the original one is too long
\institute{
           J. Nolen \at Department of Mathematics, Duke University 
           Durham, NC 27708, USA , \email{nolen@math.duke.edu}
\and 
     G.A. Pavliotis \at Department of Mathematics  Imperial College London 
        London SW7 2AZ, UK \email{g.pavliotis@imperial.ac.uk}
\and        
     A.M. Stuart \at   Mathematics Institute 
        Warwick University 
        Coventry CV4 7AL, UK \email{A.M.Stuart@warwick.ac.uk}   
        }
%
% Use the package "url.sty" to avoid
% problems with special characters
% used in your e-mail or web address
%
\maketitle

\abstract{
The need to blend observational data and mathematical models arises in many applications
and leads naturally to inverse problems. Parameters appearing
in the model, such as constitutive tensors, initial conditions, boundary conditions, and forcing can be estimated on the basis of observed data. The resulting inverse problems are often ill-posed and some form of regularization is required. 
These notes discuss parameter estimation in situations where the unknown parameters vary across multiple scales.  We  illustrate the main ideas using a simple model for groundwater flow.\newline\indent
We will highlight various approaches to regularization
for inverse problems,
including Tikhonov and Bayesian methods.
We illustrate three ideas that arise when considering
inverse problems in the multiscale context. The
first idea is that the choice of space or set in which to 
seek the solution to the inverse problem is intimately related to
whether a homogenized or full multiscale 
solution is required. This is a choice of regularization.
The second idea is that, if a homogenized solution
to the inverse problem is what is desired, then this 
can be recovered from carefully designed
observations of the full multiscale system. 
The third idea is that the theory of
homogenization can be used to improve the estimation
of homogenized coefficients from multiscale data.}

% \keywords{}
% \subjclass{}

\section{Introduction}
\label{sec:intro}

The objective of this overview is to demonstrate the important role
of multiscale modelling in the solution of inverse problems
for differential equations. 
The main inverse problem we discuss is that of determining unknown parameters by matching observed
data to a differential equation model involving those parameters. The unknown parameters may be functions, in general, and they may have variation over multiple (length) scales. This multiscale structure makes the forward problem more challenging: numerically computing the solution to the differential equation requires very high resolution. The multiscale structure also complicates the inverse problem. Should we try to fit the data with a high-dimensional parameter, or should we seek a low-dimensional ``homogenized" approximation of the parameter? If a low-dimensional parameter model is used, how should we account for the mismatch between the true parameters and the low-dimensional representation? After obtaining a solution to the inverse problem, one typically wants to make further predictions using whatever parameter is fit to the observed data, so it is important to consider whether a low-dimensional representation of the unknown parameter is sufficient to make additional predictions.

Throughout these notes the unknown parameters will be denoted by $u \in X$; typically $u$ is a function assumed to lie in a Banach space $X$. We use $y \in Y$ to denote the data (for simplicity we often take $Y=\bbR^N$) and $z$ to denote the predicted quantity,
assumed to be an element of a Banach space $Z$ or,
in some cases, a $Z-$valued random variable. 
The map $\cG:X \to \bbR^N$ denotes the forward 
mapping from the unknown parameter to the data, 
and $\cF:X \to Z$ (or $\cF: X \times \Omega \to Z$
in the random case) denotes the forward
mapping from the parameter to the prediction. 
We sometimes refer to $\cG$ as the 
{\em observation operator} and $\cF$ as
the {\em prediction operator}. Both $\cG$ and
$\cF$ are typically derived from a common
solution operator $G:X \to P$ mapping $u \in X$
to the solution $G(u) \in P$ of a partial differential
equation (PDE), where $P$ is a Banach space. For example $\cG$ may be derived
by composing $G$ with $N$ linear functionals.

The ideal inverse problem is to determine 
$u \in X$ from knowledge of
$y \in \bbR^N$ where it is assumed that $y=\cG(u).$
In practice, however, the data $y$ is generated from outside
this clean mathematical model, so it is natural
to think of the data $y$ as being given by 
\begin{equation}
\label{eq:data}
y=\cG(u)+\xi
\end{equation}
for some $\xi \in \bbR^N$ quantifying model 
error\footnote{Model error can be incorporated within
the set of unknown parameters $u$ and estimated
using data; however this idea is not pursued
here.} and
observational noise. The value of $\xi$ is not known,
but it is common in applications to assume
that some of its statistical properties are known
and these can then be built into the methods used to
estimate $u$. Once the function $u$ is determined
by solving this inverse problem, it can be
used to make a prediction $z=\cF(u).$

We illustrate three ideas that arise when
attempting to solve the inverse problem
defined by \eqref{eq:data} 
in the multiscale context:

\begin{itemize}

\item (a) 
The choice of the space or set in which to 
seek the solution to the inverse problem is intimately related to
whether a low-dimensional ``homogenized" solution or a high-dimensional ``multiscale" 
solution is required for predictive capability. This is a choice of regularization.

\item  
(b) If a homogenized solution
to the inverse problem is desired, then this 
can be recovered from carefully designed
observations of the full multiscale system.

\item  
(c) The theory of
homogenization can be used to improve the estimation
of homogenized parameters from observations of multiscale data.

\end{itemize}

In Section \ref{sec:for} we consider in detail a worked example which exemplifies the use of
multiscale methods to approximate the forward
problems $\cG$ and $\cF$ for data and predictions;
this example will be used to illustrate many
of the general ideas developed in these notes,
and the three ideas (a)--(c) in particular.
Section \ref{sec:inv}
is devoted to a brief overview of 
regularization techniques for inverse problems,
and to discussion of the idea (a).
Section \ref{sec:invm} is devoted to the 
idea (b). We study the problem of estimating
a single scalar parameter in a homogenized
model of groundwater flow, given data
which is generated by a full multiscale
model. This may be seen as a surrogate
for understanding the use of real-world
data (which is typically multiscale in
character) to estimate parameters in simpler
homogenized models. Section
\ref{sec:invm2} is devoted to the idea (c). We
study the use of
ideas from multiscale methodology to enhance
parameter estimation techniques for
homogenized models. The viewpoint
taken is that the statistics of the
error $\xi$ appearing in \eqref{eq:data}
can be understood using the theory of homogenization for random media; when these statistical
properties depend on the unknown parameter
$u$ the noise $\xi$ is no longer additive and
its dependence on $u$ plays an important role in the
parameter estimation process.

\subsection{Notation}
\label{ssec:not}

The following notation will be used throughout.
We use $|\cdot|$ to denote
the Euclidean norm on $\bbR^m$ (for possibly
different choices of $m$).
We let $\eS$ (resp. $\pS$) denote the set of symmetric 
(resp. positive-definite) second order
tensors on $\bbR^d$. If $\Gamma \in \pS$, we define the weighted norm $|\cdot|_{\Gamma}=|\Gamma^{-\frac12}\cdot|$ on $\bbR^m$. Throughout the notes, $X$ is a Banach space, containing the
functions that we wish to estimate, and $E$ a Banach
space compactly embedded into $X$. When studying the inverse problem from a Bayesian perspective we will use Gaussian priors
on $X$, defined via a covariance operator $\cC$
on a Hilbert space $H \supseteq X$, 
with norm $\|\cdot\|_{H}$. 
In this situation $E$ will be
the Hilbert space with norm $\|\cC^{-\frac12}\cdot\|_{H}$.
%
%%%%%%%%%%%%%%%%%%%%%%%%%%%%%%%%%%%%%%%%%%%%%%%%%%%%%%%%%%%%%%%%%%%%%%%%%
%
\subsection{Running Example}
\label{ssec:g}

We consider a model for groundwater flow in a medium with permeability tensor $k$,
pressure $p$ and Darcy velocity $v$ (or the volume flux of water per unit area) related to the pressure via the Darcy law: 
\begin{equation}\label{e:darcy}
v = -\frac{k}{\mu} ( \nabla p - \rho g \hat{e}_z)
\end{equation}
where $\mu$ is the fluid viscosity, $\rho$ is the fluid density, $g$ is the acceleration due to gravity and $\hat{e}_z$ is the unit vector in the $z$-direction. We choose units in which $\mu=1.$ We also assume that we have a constant density fluid and redefine the pressure by adding $\rho g z$ ($z$ is the vertical direction) to write~\eqref{e:darcy} in the form $v= -k \nabla p$. Assuming that the Darcy velocity is divergence-free, except at certain
known source/sink locations,
we obtain the following elliptic equation for the pressure: 
\begin{align}
\begin{split}
\label{eq:pde1}
\nabla \cdot v&=f,\quad x \in D,\\
p&=0, \quad x \in \partial D,\\
v&=-k \nabla p
\end{split}
\end{align}
where $D \subset \bbR^d$ is an open and bounded set with regular boundary, and $f$ is assumed to be known. The permeability tensor field $k$, 
however, is assumed to be
unknown and must be determined from data.
In order to make the elliptic PDE \eqref{eq:pde1}
for the pressure $p$ well-posed, we assume that
the permeability tensor $k(x)$ is an element
of $\pS$ and so we write it as the (tensor) 
exponential: $k(x)=\exp\bigl(u(x)\bigr)$, $u \in \eS$.
It is natural to view $u$ as an element of
$X:=L^{\infty}(D;\eS)$ and to consider
weak solutions of \eqref{eq:pde1} 
with $f \in H^{-1}(D).$ 
Then we have a unique solution $p \in H^1_0(D)$
satisfying 
\begin{equation}
\label{eq:pest}
\|\nabla p\|_{L^2} \le c_1\exp(\|u\|_{X})\|f\|_{H^{-1}},
\end{equation}
for some $c_1>0$ depending only on $d$ and $D$, and $\| u\|_X$ being the essential supremum of the spectral radius of the matrix $u(x)$, as $x$ varies over $D$:
$$
\| u\|_X = \essup_{x \in D} \left( \max_{\substack{\xi \in \bbR^d \\ \lvert \xi \rvert = 1}} \; \lvert u(x) \xi \rvert \right).
$$
Thus we may define $G:X \to H^1_0(D)$ by $G(u)=p.$
Now consider a set of real-valued continuous linear
functionals $\ell_j:H^1(D) \to \bbR$ and define
$\cG:X \to \bbR^N$ by $\cG(u)_j=\ell_j(G(u)).$
The inverse problem is to determine $u \in X$ from
$y \in \bbR^N$ where it is assumed that $y$ is given
by \eqref{eq:data}. 
Using \eqref{eq:pest} one may show that $G:X \to H^1_0(D)$ (resp.
$\cG:X \to \bbR^N$) is Lipschitz.
Indeed if $p_i$ denotes  the solution to
\eqref{eq:pde1} with log permeability $u_i$ 
then,
we have
\begin{equation}
\label{eq:pest2}
\|\nabla p_1-\nabla p_2\|_{L^2} \le (c_1)^2 \| u_1 - u_2 \|_X \exp\Bigl(
2 (\|u_1\|_{X} + \|u_2\|_{X}) \Bigr) \,   \|f\|_{H^{-1}}.
\end{equation}

Study of the transport of contaminants in groundwater
flow is a natural example of a useful prediction that
can be made once the inverse problem is solved. To model
this scenario we consider a particle $x(t) \in \bbR^d$
which is advected by the the groundwater velocity field $v/\phi$, where $\phi$ is the porosity of the rock and $v$ is the Darcy velocity field
from \eqref{eq:pde1}, and subject to diffusion with coefficient $2\eta.$
Assuming that the contaminant is initially at
$\xt$ we obtain the stochastic differential equation
(SDE):
\begin{equation}
dx=\frac{v(x)}{\phi} \, dt+\sqrt{2\eta} \, dW, \quad x(0)=\xt,
\end{equation}
where $W(t)$ is a standard Brownian motion on $\R^d$.
If we are interested in predicting the
location of the contaminant at time $T$
then our prediction will be the function $\cFn$
given by $\cFn(u)=x(T).$ Here for each fixed
$\eta \in [0,\infty)$ the function $\cFn$
maps $X$ into the family of $\bbR^d-$valued random
variables.

%%%%%%%%%%%%%%%%%%%%%%%%%%%%%%%%%%%
%%%%%%%%%%%%%%%%%%%%%%%%%%%%%%%%%%%
%%%%%%%%%%%%%%%%%%%%%%%%%%%%%%%%%%%
\section{The Forward Problem: Multiscale Properties}
\label{sec:for}

Some inverse problems arising in applications
have the property that
the forward model $\cG$ mapping the unknown to the
data will produce similar output on both highly
oscillatory functions $u$ and on 
appropriately chosen smoothly varying
functions $u$. Furthermore, for some choices
of prediction function $\cF$ the predictions
themselves will also be close for
both highly
oscillatory functions $u$ and on 
appropriately chosen smoothly varying
functions $u$.
These properties can be seen from an application
of multiscale analysis, and we illustrate them by considering the 
problem introduced in Section 
\ref{ssec:g}. There are many texts on the theory of multiscale analysis. For example, the basic homogenization theorems discussed here are developed in \cite{lions}.
A recent overview of the subject, with many other references and using the
same notational conventions that we adopt here, is 
\cite{PS08}. 

We consider a multiscale version of
the running example from Section \ref{ssec:g} 
where the permeability tensor is $k = \Ke(x)=K(x,x/\epsilon)$
where $K:D\times \bbT^d \to \pS$ is periodic in the second argument, $\epsilon > 0$ a small parameter. 
For now we have assumed periodic dependence on
the fast scale in $\Ke$; however we will
generalize this to random dependence in later
developments.

With this permeability we obtain the family of problems
\begin{subequations}
\label{eq:pde22}
\begin{eqnarray}
\nabla \cdot \ve &=& f,\quad x \in D,\\
\pe &=& 0, \quad x \in \partial D,\\
\ve &=&  -\Ke \nabla \pe.
\end{eqnarray}
\end{subequations}
If we set $\eta = \epsilon \eta_0$, then the transport 
of contaminants is given by the SDE
\begin{equation}
d\xe=\frac{\ve(\xe)}{\phi}\, dt+\sqrt{2\eta_0 \epsilon} \, dW, \quad \xe(0)=\xt.
\label{eq:sde22}
\end{equation}

Standard techniques from the theory of homogenization for
elliptic PDEs can be used to show that for $\epsilon$ small,
\begin{equation}
\label{eq:pap}
\pe(x) \approx \pea(x):=p_0(x)+\epsilon p_1(x,\frac{x}{\epsilon})
\end{equation}
where $p_0$ and $p_1$ are defined as follows.
First we define the effective (homogenized) permeability
tensor $\Kb$ via solution of the 
{\em cell problem} for $\chi(x,y)$:
\begin{equation}
\label{eq:cell}
-\nabla_{y} \cdot \bigl(\nabla_{y}\chi K^T\bigr)=\nabla_{y} \cdot K^T,\quad y \in 
\bbT^d.
\end{equation}
Then
\begin{align}
\label{eq:K}
\Kb(x)&=\int_{\bbT^d} Q(x,y)dy,\\
\label{eq:Q}
Q(x,y)&=K(x,y)+K(x,y)\nabla_{y}\chi(x,y)^T.
\end{align}
In this sense we observe that the effective
diffusivity $\Kb(x)$ is the average of $Q(x,y)$
over the fast scale $y$. This is {\em not} 
equal to the average of $K(x,y)$ over $y$, 
except in trivial cases.  
We denote by $\uo$ the logarithm of $K_0$ so that
$K_0=\exp(\uo).$

The function $p_0$ solves the ($\epsilon$ independent) elliptic PDE
\begin{subequations}
\label{eq:pde2}
\begin{eqnarray}
\nabla \cdot v_0 &=&  f,\quad x \in D,\\
p_0 &=& g, \quad x \in \partial D,\\
v_0 &=& -\Kb \nabla p_0.
\end{eqnarray}
\end{subequations}
and the corrector $p_1$ is given by
\begin{equation}
\label{eq:p1}
p_1(x,y)=\chi(x,y) \cdot \nabla p_0(x).
\end{equation}
Note that \eqref{eq:cell} may be written
as
\begin{equation}
\label{eq:Qprop}
-\nabla_{y} \cdot \bigl(Q^T\bigr)=0,\quad y \in 
\bbT^d.
\end{equation}
This shows that $Q$, which is averaged to give the effective permeability tensor, is divergence-free with respect to the fast variable $y.$

It is possible to prove that, in the limit as $\eps \rightarrow 0$, solutions to~\eqref{eq:pde22} converge to solutions to~\eqref{eq:pde2}, the convergence being strong in $L^2(D)$ and weak in $H^1(D)$~\cite{cioran, allaire, PS08}. However if we want to prove strong
convergence in $H^1$ then we need to include
information about the corrector term $p_1$. The
following theorem and corollary summarize these
ideas. For proofs see \cite{allaire}, or the discussion
in the texts \cite{cioran,PS08}.

\begin{theorem}  
Let $\pe$ and $\po$ be the solutions of~\eqref{eq:pde22} and~\eqref{eq:pde2}. Assume that $f \in C^{\infty}(D)$ and that $K(x,y) \in C^{\infty}(D ; C^{\infty}_{\rm{per}}(\T^d))$. Then
\begin{equation}
\lim_{\epsilon \rightarrow 0}
\|\pe-\pea\|_{H^{1}}=0. 
\label{e:corr}
\end{equation}
\label{t:homcon}
\end{theorem}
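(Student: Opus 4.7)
I would prove Theorem~\ref{t:homcon} by a two-scale expansion / energy argument. Let $r^\eps := \pe-\pea$ and apply $L^\eps := -\nabla\cdot(\Ke\nabla\,\cdot\,)$ to $\pea=\po+\eps\,p_1(x,x/\eps)$, collecting terms by powers of $\eps$. Using the chain rule $\nabla[\phi(x,x/\eps)]=(\nabla_x\phi)(x,x/\eps)+\eps^{-1}(\nabla_y\phi)(x,x/\eps)$ together with the identity $p_1(x,y)=\chi(x,y)\cdot\nabla\po(x)$, the $\eps^{-1}$ contribution to $L^\eps\pea$ cancels exactly by the cell problem \eqref{eq:cell}. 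The $\eps^0$ contribution reassembles, via \eqref{eq:Q}, into $-(\nabla_x\cdot \Qe)\nabla\po$; its $y$-average is $-\nabla\cdot(\Kb\nabla\po)=f$ by \eqref{eq:pde2}, and the oscillatory remainder $\Qe-\Kb$ has zero $y$-mean and is divergence-free in $y$ by \eqref{eq:Qprop}. Under the $C^\infty$ assumptions on $K$ and $f$ (which by elliptic regularity propagate to $\po$ and to $\chi(\cdot,y)$), this remainder can be written as $\eps$ times the $x$-divergence of a bounded skew ``flux corrector''. This gives an $H^{-1}$ bound of order $\eps$. The remaining terms in $L^\eps\pea+f$ are $\eps$ times smooth fields with bounded periodic fast dependence, hence also $o(1)$ in $H^{-1}$.

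Second, I would address the boundary mismatch: $\pea|_{\partial D}=\eps\,p_1(x,x/\eps)|_{\partial D}$ is not zero, so $r^\eps\notin H^1_0(D)$. Following the standard construction, I introduce a boundary corrector $\theta^\eps\in H^1(D)$ supported in an $\eps$-neighbourhood of $\partial D$, agreeing with $p_1(\cdot,\cdot/\eps)$ on $\partial D$, built from a smooth cutoff of a bounded extension. The smoothness of $\chi$ and $\nabla\po$ yields the classical boundary-layer estimates $\|\theta^\eps\|_{L^2(D)}=O(\sqrt\eps)$ and $\|\nabla\theta^\eps\|_{L^2(D)}=O(\eps^{-1/2})$, so $\|\eps\theta^\eps\|_{H^1(D)}=O(\sqrt\eps)\to 0$.

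Third, set $w^\eps:=r^\eps+\eps\theta^\eps\in H^1_0(D)$. Since $-\nabla\cdot(\Ke\nabla\pe)=f$, we have $L^\eps w^\eps=L^\eps\pea+f+\eps L^\eps\theta^\eps$, whose $H^{-1}$ norm is $o(1)$ by the previous two steps. The uniform ellipticity of $\Ke$ (from $K\in C^\infty(\bar D;C^\infty_{\rm per}(\T^d;\pS))$ on compact domains) gives the standard energy estimate $\|w^\eps\|_{H^1}\le C\|L^\eps w^\eps\|_{H^{-1}}$, whence $w^\eps\to 0$ in $H^1$. Combining with $\eps\theta^\eps\to 0$ in $H^1$ yields $\|\pe-\pea\|_{H^1}\to 0$.

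The hardest step is the $H^{-1}$ control of the oscillatory $\eps^0$ residual $(\Qe-\Kb)\nabla\po$: without exploiting structure, this object is only $O(1)$ in $L^2$. The crucial point is the divergence-free property \eqref{eq:Qprop} of $Q$ in the fast variable, which allows one to solve an auxiliary cell problem for a skew-symmetric matrix field whose $y$-divergence equals $Q-\Kb$. The smoothness hypothesis ensures this object is bounded, and integration by parts in $x$ converts the $\eps^{-1}$ Jacobian of the fast evaluation into the desired $O(\eps)$ bound in $H^{-1}$. Everything else — handling the smooth $O(\eps)$ remainders and the boundary layer — is quantitative bookkeeping enabled by the $C^\infty$ assumptions.
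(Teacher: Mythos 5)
Your proposal is correct and is essentially the argument the paper relies on: the paper does not prove Theorem~\ref{t:homcon} itself but defers to \cite{allaire,cioran,PS08}, where the proof is exactly this classical scheme --- cancel the $\eps^{-1}$ terms via the cell problem, control the mean-zero oscillatory residual $(\Qe-\Kb)\nabla\po$ in $H^{-1}$ using the skew flux corrector afforded by \eqref{eq:Qprop}, remove the $O(\sqrt{\eps})$ boundary layer with a cutoff corrector, and close with the uniform energy estimate. (Only a trivial sign slip: $L^\eps w^\eps=f-L^\eps\pea+\eps L^\eps\theta^\eps$, not $L^\eps\pea+f+\eps L^\eps\theta^\eps$.)
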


\begin{corollary}\label{cor:homcon}
Under the same conditions as in Theorem \ref{t:homcon} we have \label{c:homcon}
$$\|\pe-p_0\|_{L^2} \to 0\quad {\rm and}\quad \|\nabla \pe-\bigl(I+\chi_y(\cdot,\cdot/\epsilon)^T\bigr)\nabla p_0\|_{L^2} \to 0$$
as $\epsilon \to 0.$
\end{corollary}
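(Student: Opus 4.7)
My plan is to derive both statements directly from Theorem \ref{t:homcon} by comparing $\pe$ and $\nabla\pe$ with the two-scale ansatz $\pea(x) = p_0(x) + \epsilon p_1(x,x/\epsilon)$, and absorbing the discrepancy via the triangle inequality. The point is that the corollary follows almost automatically once the theorem is in hand, provided one expands $\nabla\pea$ correctly.

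For the first assertion I would start from
$$\|\pe - p_0\|_{L^2} \le \|\pe - \pea\|_{L^2} + \|\pea - p_0\|_{L^2}.$$
The first term tends to zero by the $H^1$ convergence in Theorem \ref{t:homcon}. For the second, note that $\pea - p_0 = \epsilon\,\chi(\cdot,\cdot/\epsilon)\cdot\nabla p_0$; since $\chi \in C^\infty(D;C^\infty_{\mathrm{per}}(\bbT^d))$ is uniformly bounded and $\nabla p_0 \in L^2(D)$, this term is bounded by $C\epsilon\|\nabla p_0\|_{L^2}$ and vanishes with $\epsilon$.

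For the second assertion, the key step is to differentiate $\pea$ via the chain rule, noting that the $\epsilon^{-1}$ produced by differentiating the fast argument cancels the outer $\epsilon$ prefactor:
$$\nabla\pea(x) = \nabla p_0(x) + (\nabla_y p_1)(x,x/\epsilon) + \epsilon(\nabla_x p_1)(x,x/\epsilon).$$
From $p_1(x,y) = \chi(x,y)\cdot\nabla p_0(x)$ one computes $(\nabla_y p_1)(x,y) = \chi_y(x,y)^T\nabla p_0(x)$, so
$$\nabla\pea(x) = \bigl(I + \chi_y(x,x/\epsilon)^T\bigr)\nabla p_0(x) + \epsilon R^\epsilon(x), \qquad R^\epsilon := (\nabla_x p_1)(\cdot,\cdot/\epsilon).$$
The triangle inequality then yields
$$\|\nabla\pe - (I + \chi_y^T)\nabla p_0\|_{L^2} \le \|\nabla\pe - \nabla\pea\|_{L^2} + \epsilon\|R^\epsilon\|_{L^2},$$
and the first term goes to zero by Theorem \ref{t:homcon}. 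Under the smoothness assumptions of the theorem, elliptic regularity for \eqref{eq:pde2} yields $p_0 \in H^2(D)$, and standard regularity for the cell problem \eqref{eq:cell} gives $\chi,\nabla_x\chi \in L^\infty(D\times\bbT^d)$, so $R^\epsilon$ is bounded in $L^2(D)$ uniformly in $\epsilon$, whence the second term is $O(\epsilon)$.

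The only mildly delicate point is tracking the precise cancellation between the $\epsilon$ prefactor and the $\epsilon^{-1}$ produced by the chain rule, so that $\chi_y^T\nabla p_0$ appears at leading order, together with the uniform $L^2$ bound on the $O(\epsilon)$ remainder $R^\epsilon$. Both rest entirely on the smoothness of $K$ and $f$ already assumed in Theorem \ref{t:homcon}; no new homogenization estimate is required, and the corollary is essentially bookkeeping on top of the theorem.
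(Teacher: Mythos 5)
Your argument is correct and is exactly the standard derivation the paper relies on (it cites the literature rather than writing it out): expand $\nabla\pea$ by the chain rule so that the $\epsilon^{-1}$ from the fast variable cancels the $\epsilon$ prefactor, leaving $(I+\chi_y^T)\nabla p_0$ plus an $O(\epsilon)$ remainder controlled by the smoothness of $\chi$ and $p_0$, and then apply the triangle inequality with Theorem \ref{t:homcon}. No gaps; the regularity you invoke is available under the stated hypotheses.
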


In fact it is frequently the case that the convergence in Theorem \ref{t:homcon} may be obtained in a stronger topology. Reflecting this we make the following assumption.
\begin{assumption} \label{ass:1}
The function $\pe$ converges to $p_0$
in $L^{\infty}(D)$ and its gradient
converges to the gradient of $p_0+\epsilon p_1$
in $L^{\infty}(D)$ so that
\begin{equation*}
\lim_{\epsilon \rightarrow 0}
\|\pe-\pea\|_{W^{1,\infty}}=0.
\end{equation*}
\end{assumption}

In Appendix~\ref{app:b} we prove this assumption for the one dimensional version of \eqref{eq:pde22}. The proof in the multidimensional case will be presented elsewhere~\cite{NPS10}. The proof of this assumption in the multidimensional case is based on the estimates proved in~\cite{AL87} (in particular, Lemma 16), see also~\cite[Lemma 2.1]{EHW2000}.

With these limiting properties of the elliptic problem~\eqref{eq:pde22} at hand it is natural to ask what is the limiting behaviour of $\xe$ governed by \eqref{eq:sde22}. To answer this question we define 
\begin{equation}
\frac{d \xo}{d t}=\frac{\vo(\xo)}{\phi}, \quad \xo(0)=\xt.
\label{eq:ode22}
\end{equation}
Notice that this ordinary differential equation
(ODE) has vector field $\vo$ which is defined
entirely through knowledge of the homogenized
permeability $K_0$: once $K_0$ is known, the
elliptic PDE \eqref{eq:pde2} can be solved
for $p_0$ and then $v_0$ is recovered from
(\ref{eq:pde2}c). 
If we can show that solutions of \eqref{eq:sde22}
and \eqref{eq:ode22} are close then this will
establish that the prediction of particle
transport in the model \eqref{eq:pde22},
\eqref{eq:sde22} can be made accurately by use of
only homogenized information about the permeability.

In proving such a result there are a number of
technical issues which arise caused by the presence
of the boundary $D$ of the domain in which
the PDE \eqref{eq:pde22} is posed. In particular solutions
of \eqref{eq:sde22} may leave $D$ requiring a definition
of the velocity field outside $D$. These issues
disappear if we consider the case where $D$ is itself
a box of length $L$ and is equipped with periodic
boundary conditions instead of Dirichlet conditions: we
may then extend all fields to the whole of $\bbR^d$ by
periodicity. In this case, the homogenization theory for \eqref{eq:pde22}
with (\ref{eq:pde22}b) replaced by periodic boundary
conditions is identical to that given above, except that
(\ref{eq:pde2}b) is also replaced by periodic boundary
conditions.  We write $D=(L\bbT)^d$ and adopt this
periodic setting for the next theorem, which is proved in Appendix \ref{app:a}:

\begin{theorem} \label{t:avcon}
Let $\xe (t)$ and $\xo (t)$ be the solutions to 
equations~\eqref{eq:sde22} and~\eqref{eq:ode22}, with
velocity fields extended from $D=(L\bbT)^d$ to $\bbR^d$
by periodicity, and assume that 
Assumption~\ref{ass:1} holds. 
Assume also that $f \in C^{\infty}(D)$ and that $K(x,y) \in C^{\infty}(D ; C^{\infty}_{\rm{per}}(\T^d))$. Then
$$\lim_{\epsilon \to 0} \bbE \sup_{0 \le t \le T}
\|\xe(t)-\xo(t)\|=0.$$
\end{theorem}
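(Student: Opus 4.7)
The plan is to derive a Gronwall estimate for $\bbE\sup_{[0,t]}|\xe(s) - \xo(s)|$ by decomposing the velocity difference into three pieces; the main technical obstacle is the averaging of a rapidly oscillating, mean-zero term along the stochastic trajectory.

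Subtracting the integrated forms of \eqref{eq:sde22} and \eqref{eq:ode22} gives
\begin{equation*}
\xe(t) - \xo(t) = \phi^{-1}\!\int_0^t [\ve(\xe(s)) - \vo(\xo(s))]\,ds + \sqrt{2\eta_0\epsilon}\,W(t),
\end{equation*}
where $\bbE\sup_{[0,T]}\sqrt{2\eta_0\epsilon}\,|W(t)| = O(\sqrt\epsilon)$ by Doob's inequality. For the drift, introduce the two-scale velocity $\tilde v(x,y) := -Q(x,y)\nabla p_0(x)$ and split
\begin{equation*}
\ve(\xe) - \vo(\xo) = \underbrace{[\ve(\xe) - \tilde v(\xe,\xe/\epsilon)]}_{A_1} + \underbrace{[\tilde v(\xe,\xe/\epsilon) - \vo(\xe)]}_{A_2} + \underbrace{[\vo(\xe)-\vo(\xo)]}_{A_3}.
\end{equation*}
The term $A_1$ is $o(1)$ in $L^\infty$ by Assumption \ref{ass:1}: writing out $\nabla\pea$ using $p_1(x,y) = \chi(x,y)\cdot\nabla p_0(x)$ shows $\Ke\nabla\pea = Q(\cdot,\cdot/\epsilon)\nabla p_0 + O(\epsilon)$ uniformly, so $A_1 = -\Ke(\nabla\pe - \nabla\pea) + O(\epsilon) = o(1)$. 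The term $A_3$ is Lipschitz, $|A_3|\le L|\xe-\xo|$, by smoothness of $\Kb$ and $\nabla p_0$, and is absorbed by Gronwall. The middle term $A_2 = -[Q(\xe,\xe/\epsilon) - \Kb(\xe)]\nabla p_0(\xe)$ has mean zero in $y$ by \eqref{eq:K} and is the heart of the matter.

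To estimate $\bbE\sup_{[0,T]}\bigl|\int_0^t A_2(\xe(s),\xe(s)/\epsilon)\,ds\bigr|$, I would use a perturbed-test-function / corrector argument. Let $\Psi(x,y)$ be the smooth, periodic, mean-zero (in $y$) solution on $\T^d$ of $\eta_0\Delta_y\Psi(x,y) = A_2(x,y)/\phi$; solvability follows from the mean-zero property of $A_2$. Applying It\^o's formula to $\epsilon\,\Psi(\xe(t),\xe(t)/\epsilon)$, using $\nabla_x[\Psi(x,x/\epsilon)] = \nabla_x\Psi + \epsilon^{-1}\nabla_y\Psi$ together with the small diffusion coefficient $2\eta_0\epsilon$, the Hessian contribution $\eta_0\epsilon\cdot\epsilon^{-1}\Delta_y\Psi$ produces $A_2/\phi$ at leading order, so that
\begin{equation*}
\phi^{-1}\!\int_0^t A_2\,ds = \epsilon[\Psi(\xe(t),\xe(t)/\epsilon) - \Psi(\xt,\xt/\epsilon)] - \phi^{-1}\!\int_0^t (\nabla_y\Psi)\cdot\ve\,ds + O(\epsilon T) - M_t,
\end{equation*}
with $M_t$ a martingale of quadratic variation $O(\epsilon)\,dt$, so $\bbE\sup|M_t| = O(\sqrt\epsilon)$ by BDG. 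The main obstacle is the residual drift $\int_0^t(\nabla_y\Psi)\cdot\ve\,ds$: although $\nabla_y\Psi$ is mean-zero periodic in $y$, the factor $\ve$ is of unit size, so the naive bound on the integral is $O(T)$, not $o(1)$. Closing it requires an iterated corrector scheme—replacing $\Psi$ by $\Psi+\epsilon\Psi^{(2)}+\cdots$ with each $\Psi^{(k)}$ designed to kill the oscillating drift residual of its predecessor—or, alternatively, an integration by parts in $y$ exploiting the divergence-free identity $\nabla_y\cdot Q^T = 0$ from \eqref{eq:Qprop}.

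Once $\bbE\sup_{[0,T]}|\int_0^t A_2\,ds|\to 0$ is established, Gronwall applied to $t\mapsto\bbE\sup_{[0,t]}|\xe - \xo|$ completes the proof. The scaling is nonstandard for SDE homogenization: $\ve$ has amplitude $O(1)$ while the noise is only $O(\sqrt\epsilon)$, so the fast variable $\xe/\epsilon$ traverses the torus primarily by advection rather than diffusion; the averaging is therefore closer in spirit to Krylov--Bogolyubov averaging of a deterministic flow than to the classical diffusive homogenization of a Kolmogorov equation. I expect the iterative control of the residual drift term to be the most delicate step of the proof.
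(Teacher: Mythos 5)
Your overall strategy --- the three-term decomposition of the drift, the $O(\sqrt{\eps})$ control of the Brownian term, and Gronwall at the end --- matches the paper's proof, and your treatment of $A_1$ and $A_3$ is correct. The gap is in the central term $A_2$, and it is precisely the one you flag yourself. Your corrector solves $\eta_0\Delta_y\Psi = A_2/\phi$, i.e.\ it uses only the Laplacian as the fast generator, and this leaves the $O(1)$ residual drift $\int_0^t(\nabla_y\Psi)\cdot\ve\,ds$ uncontrolled. Neither of your proposed fixes closes the argument as stated: the iterated scheme $\Psi+\eps\Psi^{(2)}+\cdots$ gains no power of $\eps$ per step --- each Laplacian-only corrector trades one $O(1)$ oscillating drift for another --- so it amounts to a Neumann series for $(-\eta_0\Delta_y)^{-1}V\cdot\nabla_y$, which converges only when $V$ is small relative to $\eta_0$; and the integration by parts is not carried out.

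The paper's resolution is to put the advection into the fast generator from the start. Writing $V(x,y)=-Q(x,y)\nabla p_0(x)$, it solves the cell problem $-\cL_0\Phi = V-\vo$ with $\cL_0 = V(x,y)\cdot\nabla_y+\eta_0\Delta_y$, which is exactly the $\eps^{-1}$ part of the generator of the pair $(\xe,\xe/\eps)$. This single Poisson equation absorbs the residual drift you are worried about, and it is solvable by the Fredholm alternative because $\cL_0$ is uniformly elliptic on $\T^d$ and Lebesgue measure is invariant for it --- the latter holding precisely because $\nabla_y\cdot Q^T=0$, equation \eqref{eq:Qprop}, which is where the divergence-free identity you mention actually enters. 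Applying It\^o's formula to $\eps\Phi(x,x/\eps)$ then leaves only error terms that are genuinely small: $O(\eps)$ boundary terms, a martingale of size $O(\sqrt{\eps})$, and drift corrections involving $\delta^\eps$ which vanish by Assumption~\ref{ass:1}. This also corrects your closing heuristic: under the scaling $\eta=\eps\eta_0$ the fast variable $\xe/\eps$ feels advection and diffusion at the \emph{same} order $\eps^{-1}$, and the ellipticity supplied by the diffusion is what makes the cell problem solvable; the averaging here is genuinely diffusive rather than Krylov--Bogolyubov averaging of a deterministic flow.
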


In summary, this example exhibits the property that, if the length scale $\epsilon$ is small, the data generated from $K^\epsilon$ and $K_0$ may appear very similar due to homogenization effects. Therefore, when trying to infer parameters from data, it is difficult to distinguish between $K^\epsilon$ and $K_0$ without some form of regularization or prior assumptions about the form of the parameter. On the other hand, Theorem \ref{t:avcon} shows that knowing only $K_0$ is sufficient to make accurate predictions of the trajectories of (\ref{eq:sde22}).

%%%%%%%%%%%%%%%%%%%%%%%%%%%%%%%%%%%
%%%%%%%%%%%%%%%%%%%%%%%%%%%%%%%%%%%
%%%%%%%%%%%%%%%%%%%%%%%%%%%%%%%%%%%
\section{Regularization of Inverse Problems}
\label{sec:inv}

In this section we describe various approaches
to regularizing inverse problems,
motivating them by reference to the
multiscale example in the previous section.
The approach to regularizing
which is described in
Section \ref{ssec:reg1} is developed in
detail in \cite{BK89}. 
The Tikhonov regularization approach
from Section \ref{ssec:reg2} is developed in
detail in \cite{ehn96, Fitzp91}. Both of these regularization
approaches are specific examples of
the general set-up often called PDE
constrained optimization, which we discuss 
in Section \ref{ssec:pdec}; this
subject is overviewed in \cite{HPUU09}. 
An overview of the Bayesian approach to inverse
problems, a subject that we outline in Section
\ref{ssec:reg3}, is given in \cite{Stuart10} and \cite{Fitzp91}. 

\subsection{Set-Up}
\label{ssec:setup}

Our objective here is to determine $u$, given $y$, where $u$ and $y$ are 
related by \eqref{eq:data}. We assume that,
whilst the actual value of $\xi$ is not available,
it is reasonable to view it as a single draw from a statistical
distribution whose properties are known to us. To be
concrete we assume that $\xi$ is
drawn from a mean zero Gaussian random variable 
with covariance $\Gamma$: we write this as
$\xi \sim N(0,\Gamma)$. 
We make the following continuity assumption 
concerning the observation operator $\cG$.
Note that this (local) Lipschitz condition also implies
an (exponential in $\|u\|_{X}$) bound on $|\cG(u)|.$

\begin{assumption} 
\label{ass:g}
There are constants $c_1, c_2>0$
such that, for $u_i \in X$ with 
$\|u_i\|_{X}<r, i=1,2$, 
$$|\cG(u_1)-\cG(u_2)| \le c_1 \exp(c_2 r)\|u_1-u_2\|_{X}.$$
\end{assumption}

In general the inverse problems such as that given by
\eqref{eq:data} with $\xi=0$ are hard to solve: 
they may have no solutions, multiple solutions and
solutions may exhibit sensitive dependence on initial
data. For this reason it is natural to seek
a least squares approach to finding functions $u$
which best explain the data. In view of the
assumed structure on $\xi$ a natural least squares
functional is
\begin{equation}
\label{eq:phi}
\Phi(u)=\frac12|y-\cG(u)|_{\Gamma}^2.
\end{equation}
The weighting by $\Gamma$ in the Euclidean
norm induces a normalization on the model-data
mismatch. This normalization is given by
the assumed standard deviations of the noise in
a coordinate system defined by the eigenbasis for
$\Gamma$.

\begin{example} \label{ex:rev}
Consider the running example of
Section \ref{ssec:g}.
Equation \eqref{eq:pest2} shows that
Assumption \ref{ass:g} holds in this case, noting that
$\cG(u)_j=\ell_j(p)$ for some linear functional $\ell_j$
on $H^1(D)$, 
with the choice $X=L^{\infty}(D;\eS)$, provided
$f \in H^{-1}$.
We use this example to illustrate why
inverse problems are, in general, hard. 

Assume
that the linear functionals $\ell_j$ satisfy the
property that $\ell_j(\pe-\po) \to 0$ as $\epsilon
\to 0.$ This occurs if they are 
linear functionals on $L^2(D)$, by Theorem
\ref{t:homcon} or
if Assumption \ref{ass:1} holds, if they are linear
functionals on $C({\overline D}).$
Writing this in terms of $\cG$ we have
$|\cG(\ue)-\cG(\uo)| \to 0$ as $\epsilon \to 0.$ 
(Note that this occurs even though $\ue$ and $\uo$ are not
themselves close.)
Hence there is
an uncountable family of functions (indexed
by all $\eps$ sufficiently small) which all
return approximately the same value of $\Phi(\ue)$
and thus simply minimizing $\Phi$ may be very difficult.
Furthermore, there may be minimizing
sequences which do not converge. For example
fix a particular realization of
the data given by $y=\cG(\uo)$ where $\uo$
is the homogenized log permeability. 
Then $\Phi(\ue) \geq 0$ for all $\epsilon > 0$ and $\Phi(\ue) \to 0$ as $\eps \to 0$, since 
\begin{align}
|\Phi(\ue)| & = \frac12|y-\cG(\ue)|_{\Gamma}^2 = \frac12|\cG(\uo) -\cG(\ue)|_{\Gamma}^2
\end{align}
On the other hand, $u^\epsilon$ does not converge in $X$ as $\epsilon \to 0$.

%%%%%%%%%%%%%%%%
\commentout{
Since $\ue$ is bounded in $X$ by Assumption \ref{ass:g} we deduce
that $\Phi(\ue) \to 0$ because $\cG(\ue) \to \cG(\uo)$.
On the other hand, if we choose $E=H^1_0(D)$,
then $\|\ue\|_{E}={\cal O}(\epsilon^{-1})$ 
as $\epsilon \to 0.$
Thus there exist minimizing sequences which do
not converge in $E$. This choice of $E$ is natural as
it imposes weak differentiability
of the velocity field $\ve$ appearing in \eqref{eq:sde22}.
}%%%%%%%%%%%%%%%%%%%%%%%%
\noindent\qed\end{example}

In order to overcome the difficulties demonstrated
in this example regularization is needed.
In the remaining sections we discuss various
regularizations, in general, illustrating ideas
by returning to the running example.

\subsection{Regularization by Minimization Over a Convex, Compact Set}
\label{ssec:reg1}

Recall that $E$ is a Banach space compactly embedded
into $X$.
Let $\Ea=\{u \in E:\|u\|_{E} \le \alpha\}.$
Then $\Ea$ is a closed convex and bounded set in $E$
and, as such, any sequence in $\Ea$ must contain
a weakly convergent subsequence with limit in $\Ea$
(see, for example, Theorem 1.17 in \cite{HPUU09}).
Now consider the minimization problem
\begin{equation}
\label{eq:inf1}
\Bf=\inf_{u \in \Ea} \Phi(u).
\end{equation}

\begin{theorem}
\label{t:inf1}
Any minimizing sequence $\{u^n\}_{n \in \bbZ^+}$
for \eqref{eq:inf1} contains a weakly convergent
subsequence in $E$ with limit $\bu \in \Ea$
which attains the infimum: $\Phi(\bu)=\Bf.$
\end{theorem}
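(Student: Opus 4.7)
The plan is to exploit three ingredients in sequence: (i) weak precompactness of $\Ea$ in $E$ via the cited Theorem~1.17 of \cite{HPUU09}, (ii) the compact embedding $E \hookrightarrow X$ to upgrade weak convergence in $E$ to strong convergence in $X$, and (iii) Assumption~\ref{ass:g} to pass to the limit inside $\Phi$. This is the standard direct method of the calculus of variations, tailored to the present functional analytic setting.

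First I take any minimizing sequence $\{u^n\}_{n\in\bbZ^+} \subset \Ea$, so $\Phi(u^n) \to \Bf$. Because $\Ea$ is closed, convex, and bounded in $E$, the weak compactness statement noted in the excerpt yields a subsequence $\{u^{n_k}\}$ and some $\bu \in E$ with $u^{n_k} \rightharpoonup \bu$ in $E$; moreover $\bu \in \Ea$, since convex closed sets in a Banach space are weakly closed. In particular $\|\bu\|_{E} \le \alpha$, and $\|u^{n_k}\|_{E}$ is uniformly bounded.

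Second, because $E$ is compactly embedded in $X$, the weakly convergent sequence $u^{n_k}$ converges strongly in $X$ to $\bu$, i.e.\ $\|u^{n_k}-\bu\|_{X} \to 0$. Since $E \hookrightarrow X$ is continuous, there exists $r>0$ with $\|u^{n_k}\|_{X}, \|\bu\|_{X} < r$ for all $k$. Assumption~\ref{ass:g} then gives
\begin{equation*}
|\cG(u^{n_k}) - \cG(\bu)| \le c_1 \exp(c_2 r)\, \|u^{n_k} - \bu\|_{X} \longrightarrow 0.
\end{equation*}
Continuity of $v \mapsto \tfrac12 |y-v|_{\Gamma}^2$ on $\bbR^N$ therefore yields $\Phi(u^{n_k}) \to \Phi(\bu)$. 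Combined with $\Phi(u^{n_k}) \to \Bf$, this forces $\Phi(\bu) = \Bf$, so $\bu$ attains the infimum.

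There is no serious obstacle here: the argument is a template application of the direct method. The only point requiring care is that continuity of $\Phi$ is available only in the $X$-norm (Assumption~\ref{ass:g} is a statement about $\|\cdot\|_{X}$, not about the weak topology on $E$), which is precisely why the compact embedding $E \hookrightarrow X$ is indispensable — it is what converts the weak convergence provided by the bounded set $\Ea$ into convergence strong enough to feed into the Lipschitz estimate for $\cG$. Without compactness of the embedding, lower semicontinuity of $\Phi$ with respect to weak $E$-convergence would have to be obtained by other means.
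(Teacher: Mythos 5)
Your proof is correct and follows essentially the same route as the paper: extract a weakly convergent subsequence from the bounded, closed, convex set $\Ea$, use the compact embedding $E \hookrightarrow X$ to upgrade to strong convergence in $X$, and then invoke Assumption~\ref{ass:g} to conclude that $\Phi$ is continuous along the subsequence, so the limit attains the infimum. You are in fact slightly more careful than the paper in recording the uniform $X$-bound needed to invoke the local Lipschitz estimate and in noting why $\bu$ remains in $\Ea$.
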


\begin{proof} 
This is a classical theorem
from the field of optimization; see
\cite{HPUU09} for details and context.
Since $\{u^n\}$ is contained in
$\Ea$ we deduce 
the existence of a subsequence (which for
convenience we relabel as $\{u^n\}$)
with weak limit $\bu \in \Ea.$
Thus $u^n \rightharpoonup \bu$ in $E$. Hence,
by compactness, $u^n \to u$ in $X$. By
Assumption \ref{ass:g} we deduce that $\Phi:E \to \bbR$
is weakly continuous. By definition, for any
$\delta>0$ there exists $N=N(\delta)$ such that
$$\Bf \le \Phi(u_n) \le \Bf+\delta,\quad 
\forall n \ge N.$$
By weak continuity of $\Phi:E\to\bbR$ we deduce that
$$\Bf \le \Phi(\bu) \le \Bf+\delta.$$
The result follows since $\delta$ is arbitrary.
\qed
\end{proof}

\begin{example} \label{ex:g99}
Consider the running example of Section
\ref{ssec:g}. Let $A$ denote a
fixed symmetric positive-definite tensor $A$ so that
$\log(A)$ is defined. We define the subspace
of tensor valued functions of the form $u'=uI+\log(A)$,
for some constant $u\in\bbR$ noting that then
$\exp(u')=\exp(u)A$. By Lipschitz continuity of ${\cal G}$ in
$u' \in X$ we deduce (abusing notation)
Lipschitz continuity of ${\cal G}$ viewed as a function 
of $u \in \bbR$. We define
\begin{equation}
\label{eq:space}
\Ea=\{u\in\bbR: |u| \le \alpha\}.
\end{equation} 
We may take the norm $\|\cdot\|_{E}=|u|.$
%and note that $E$ is compact in $X=L^{\infty}(D;\eS).$ 
Thus the problem \eqref{eq:inf1} 
attains its infimum for some $\bu \in \Ea$.
The regularization of seeking to minimize $\Phi$ over
$\Ea$ corresponds to looking for solution over
a one-parameter set of tensor fields, in which the
free parameter is bounded by $\alpha.$ Note that such
a solution set automatically rules out the
oscillating minimizing sequences which were exhibited
in Example \ref{ex:rev}.
\noindent\qed\end{example}

\subsection{Tikhonov Regularization}
\label{ssec:reg2}

Instead of regularizing by seeking to minimize
$\Phi$ over a bounded and convex subset of
a compact set $E$ in $X$, we may instead
adopt the Tikhonov approach to regularization.
We consider the minimization problem
\begin{equation}
\label{eq:inf2}
\Bi=\inf_{u \in E} I(u),
\end{equation}
where
\begin{equation}
\label{eq:IT}
I(u)=\frac{\lambda}{2}\|u\|_E^2+\Phi(u).
\end{equation}

\begin{theorem}
\label{t:inf2}
Any minimizing sequence $\{u^n\}_{n \in \bbZ^+}$
for \eqref{eq:inf2} contains a weakly convergent
subsequence in $E$ with limit $\bu$
which attains the infimum: $I(\bu)=\Bi.$
\end{theorem}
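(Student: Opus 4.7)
The plan is to mimic the argument for Theorem \ref{t:inf1}, the key difference being that compactness in $E_{\rm ad}$ is replaced by a coercivity argument supplied by the Tikhonov penalty term. A subtlety here is that the minimizing sequence is not a priori bounded in $E$: one extracts boundedness from the value of the functional itself.

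First I would show that any minimizing sequence $\{u^n\}$ is bounded in $E$. Since $\Phi(u) \ge 0$ and $I(0) = \Phi(0) < \infty$, the infimum $\overline I$ is finite, so that $I(u^n) \le \overline I + 1$ for $n$ large. The Tikhonov term then yields
\begin{equation*}
\tfrac{\lambda}{2}\|u^n\|_E^2 \le I(u^n) \le \overline I + 1,
\end{equation*}
so $\{u^n\}$ is bounded in $E$. Assuming $E$ is reflexive (which holds in the Hilbert space setting of Section \ref{ssec:not}, where $E$ is endowed with the Cameron--Martin norm $\|\mathcal C^{-\frac12}\cdot\|_H$), one can pass to a weakly convergent subsequence, relabelled $\{u^n\}$, with limit $\overline u \in E$. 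By the compact embedding $E \hookrightarrow X$, this subsequence converges strongly in $X$: $u^n \to \overline u$.

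Next I would pass to the limit in $I$. Since $\{u^n\}$ is bounded in $E$ and thus in $X$, Assumption \ref{ass:g} implies that $\mathcal G$ is Lipschitz on a ball of $X$ containing the sequence, so $\mathcal G(u^n) \to \mathcal G(\overline u)$ in $\bbR^N$ and hence $\Phi(u^n) \to \Phi(\overline u)$. For the Tikhonov term, the norm $\|\cdot\|_E$ is weakly lower semicontinuous (this is standard, and follows for example from the fact that norm balls are convex and strongly closed, hence weakly closed), so
\begin{equation*}
\|\overline u\|_E^2 \le \liminf_{n \to \infty} \|u^n\|_E^2.
\end{equation*}
Combining these two facts,
\begin{equation*}
I(\overline u) = \tfrac{\lambda}{2}\|\overline u\|_E^2 + \Phi(\overline u) \le \liminf_{n\to\infty}\Bigl(\tfrac{\lambda}{2}\|u^n\|_E^2 + \Phi(u^n)\Bigr) = \liminf_{n\to\infty} I(u^n) = \overline I.
\end{equation*}
Since $\overline u \in E$, the reverse inequality $\overline I \le I(\overline u)$ is automatic, so $I(\overline u) = \overline I$ and $\overline u$ attains the infimum.

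The only real obstacle is the extraction of the weakly convergent subsequence: this requires reflexivity of $E$, which is not explicitly imposed in the statement. In the Hilbert space framework used in these notes this is free, but in a general Banach setting one would need an additional structural hypothesis (reflexivity of $E$, or that the unit ball of $E$ is weakly sequentially compact). Once that is in place, the remainder of the argument is just coercivity, strong continuity of $\Phi$ on $E$ via compact embedding and Assumption \ref{ass:g}, and weak lower semicontinuity of the norm, exactly as in the classical Tikhonov theory of \cite{ehn96}.
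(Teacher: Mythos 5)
Your proposal is correct and follows essentially the same route as the paper's proof: coercivity of the Tikhonov term gives boundedness in $E$, a weakly convergent subsequence is extracted, weak continuity of $\Phi$ (via the compact embedding $E\hookrightarrow X$ and Assumption \ref{ass:g}) combines with weak lower semicontinuity of $\|\cdot\|_E^2$ to give weak lower semicontinuity of $I$, and the infimum is attained. Your explicit flagging of the reflexivity (or weak sequential compactness of the unit ball) needed to extract the subsequence is a fair point that the paper leaves implicit, but it does not change the argument.
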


\begin{proof} This is a classical theorem
from the calculus of variations; see
\cite{Dac89} for details and context.
Since $\{u^n\}$ is a minimizing sequence and $\Phi \geq 0$, 
we deduce that for any
$\delta>0$ there exists $N=N(\delta)$ such that
$$\frac{\lambda}{2}\|u_n\|_{E}^2 \le \bar I +\delta,\quad
\forall n \ge N.$$
From this it follows that $\{u^n\}_{n \in \bbZ^+}$ is
bounded in $E$ and hence contains a weak limit $\bu$,
along a subsequence which, for
convenience, we relabel as $\{u^n\}$.
The weak continuity of $\Phi:E \to \bbR$,
together with weak lower semicontinuity of the
function $\|\cdot\|_{E}^2 \to \bbR$ implies the
weak lower semicontinuity of $I:E \to \bbR$.
Hence
$$I(\bu) \le \liminf_{n \to \infty} I(u_n) \le \Bi.$$
Since $I(\bu) \ge \Bi$, the result follows.
\qed
\end{proof}

\begin{example} \label{ex:g2}
Consider the running example of Section
\ref{ssec:g}. Let $E=H^s(D;\eS)$ and note that $E$
is compact in $X=L^{\infty}(D;\eS)$ for $s>d/2.$
Thus the problem \eqref{eq:inf2} 
attains its infimum for some $\bu \in E$.
As with the example from the previous section
the regularization rules out highly oscillating
minimizing sequences such as those seen in Example
\ref{ex:rev}. The choice of the parameter $\lambda$
will effect how much oscillation is allowed in any
minimizing sequence.
\noindent\qed\end{example}

\subsection{PDE Constrained Optimization}
\label{ssec:pdec}

The regularizations imposed in the two previous 
subsections involed the imposition of constraints
on the input $u$ to a PDE model and the resulting
minimizations were expressed in terms of $u$ alone.
For at least two reasons it is sometimes of
interest to formulate the minimization problem
simultaneously over the input variable $u$, together
with the solution of the PDE $p=G(u) \in P$: firstly
computational algorithms which work to find $(p,u)$
in $P \times X$
can be more effective than working entirely in terms
of $u \in X$; and secondly
regularization constraints may be imposed on the
variable $p$ as well as on $u.$ If $J: P \times X \to \bbR$
then this leads to constrained minimization problems
of the form
\begin{equation}
\label{eq:cmin}
\min_{(p,u) \in P \times X} J(p,u):\,\,p=G(u),\,c(p,u) \in
{\cal K}
\end{equation}
where ${\cal K}$ denotes the constraints imposed
on both the input $u$ and on the output $p$ from the
PDE model. Typically the observation operator $\cG:X
\to \bbR^N$
is found from $G$ and then the information
in $\Phi$ can be built into
the definition of $J$.

\begin{example} Consider the running example from
Section \ref{ssec:g} and assume that the observational
noise $\xi \sim N(0,\gamma^2 I).$ Define
$$J(p,u)=\frac{1}{2\gamma^2} \sum_{j=1}^N |y-\ell_j(p)|^2+\frac{\lambda_1}{2} \|u\|_{H^s}^2+\frac{\lambda_2}{2}\|p\|_{P}^2$$
for some $s>d/2.$ 
Choosing $\lambda_1=\lambda$ and $\lambda_2=0$,
together with  $c(p,u)=(p,u)$ and ${\cal K}=P \times X$
we obtain from \eqref{eq:cmin} the minimization
from Example \ref{ex:g2} in the case $\Gamma=\gamma^2 I.$
Choosing $\lambda_1=\lambda_2=0$, $c(p,u)=(p,u)$ and ${\cal K}=
P \times \Ea$ from Example \ref{ex:g99} we recover
that example. Choosing $\lambda_2 \ne 0$ and/or choosing
the constraint set ${\cal K}$ to impose constraints on $p$
leads to minimization in which the output $p$ of the PDE model
is constrained as well as the input $u$ that we are trying
to estimate.
\noindent\qed\end{example}

\subsection{Bayesian Regularization}
\label{ssec:reg3}

The preceding regularization approaches have
a nice mathematical structure and form a natural
approach to the inverse problem when a unique solution
is to be expected. But in many cases it may be
interesting or important to find a large class
of solutions, and to give relative weights to their
importance. This allows, in particular, for
predictions which quantify uncertainty. 
The Bayesian approach to regularization
does this by adopting a probabilistic framework in
which the solution to the inverse problem is
a probability measure on $X$, rather than a single
element of $X$.

We think of $(u, \, y) \in X \times \R^N$ as a random 
variable. Our goal is to find the distribution of 
$u$ given $y$, often denoted by $u|y$. We define 
the joint distribution of $(u, \, y)$ as follows.
We assume that $u$ and $\xi$ appearing in 
\eqref{eq:data} are indepenent mean zero
Gaussian random variables, 
supported on $X$ and $\bbR^N$ respectively,
with covariance operator $\frac{1}{\lambda}\cC$ 
and covariance matrix $\Gamma$ respectively.
By equation \eqref{eq:data}, the distribution 
of $y$ given $u$, denoted $y|u$, is 
Gaussian $N (\cG(u), \Gamma)$. The measure
$\mu_0=N(0,\frac{1}{ \lambda}\cC)$ is known as the 
{\em prior} measure. 
It is most natural to define the measure $\mu_0$
on a Hilbert space $H \supseteq X$. 
Under suitable conditions on $\cC$, we have $\mu_0(X)=1$. This means that under the measure $\mu_0$, $u \in X$ almost surely so that $\cG(u)$ is well-defined, almost surely. If $\mu_0(X) = 1$, it follows that the Hilbert space $E$ 
with norm $\| \cdot \|_E = \|\cC^{-1/2}\cdot \|_{H}$ 
is compactly embedded into $X.$
The space $E$ is known as the Cameron-Martin space.
In the infinite dimensional setting, functions drawn
from $\mu_0$ are almost surely not in the Cameron-Martin
space.
See \cite{bog98,Lif95} for detailed discussion
of Gaussian measures on infinite dimensional spaces.

When solving the inverse problem, the
aim is to find the posterior measure
$\mu^y(du)=\bP(du|y),$ and to obtain information 
about likely candidate solutions to the
inverse problem from it.
Informal application of Bayes' theorem gives 
\be
\bP (u| y) \propto \bP (y|u)\mu_0(u).
\label{eq:bayes}
\ene
The probability density function for
$\bP(y|u)$ is, using the property of
Gaussians, proportional to
$$\exp\bigl(-\frac12|y-\cG(u)|_{\Gamma}^2\bigr)=
\exp \bigl(- \Phi(u)\bigr).$$
The infinite dimensional analogue of this result
is to show that $\mu^y$ is absolutely continuous
with respect to $\mu_0$ with 
Radon-Nikodym derivative relating
posterior to prior as follows:
\be\label{e:radon}
\frac{d \mu^y }{d \mu_0} (u)=\frac{1}{Z} 
\exp \bigl(- \Phi(u)\bigr).
\ene
Here $\Phi(u)$ is given by~\eqref{eq:phi}
and $Z=\int_{X} \exp \bigl(- \Phi(u)\bigr) \mu_0(du).$
The meaning of the formula
\eqref{e:radon} is that expectations under the
posterior measure $\mu^y$ can be rewritten
as weighted expectations with respect to the prior:
for a function $\cF$ on $X$ we may write
$$\int_{X} \cF(u)\mu^y(du)=\int_{X} \frac{1}{Z}\exp \bigl(- \Phi(u)\bigr) \cF(u)\mu_0(du).$$

\begin{theorem} 
(\cite{CDRS08})
Assume that $\mu_0(X)=1$.
Then $\mu^y$ is absolutely continuous with 
respect to $\mu_0$ with Radon-Nikodym derivative given by~\eqref{e:radon}.
Furthermore the measure $\mu^y$ is locally Lipschitz
in the data $y$ with respect to the Hellinger
metric: there is a constant $C=C(r)$, such that, for all $y, \, y'$ with $\max \big\{|y|, \, |y'| \big\} \leq r$,
\begin{equation}\label{e:stability}
d_{\mbox{\tiny{\sc hell}}} (\mu^y , \mu^{y'}) \leq C |y - y'|.
\end{equation}
\end{theorem}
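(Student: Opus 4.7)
The strategy has two distinct pieces: establishing the Radon--Nikodym formula \eqref{e:radon}, and then turning that formula into the Lipschitz-in-$y$ bound in the Hellinger metric. I would follow the approach of \cite{CDRS08}.

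\emph{Absolute continuity.} First I would verify that $\Phi:X\to[0,\infty)$ is $\mu_0$-measurable and everywhere finite. Measurability follows from continuity of $\cG$ on $X$ (Assumption \ref{ass:g}), combined with the hypothesis $\mu_0(X)=1$. Since $\Phi\geq 0$ we have $\ee^{-\Phi}\leq 1$, and $\mu_0$-a.s.\ finiteness of $\Phi$ implies the normalising constant
\begin{equation*}
Z=Z(y)=\int_{X}\ee^{-\Phi(u)}\,\mu_0(du)
\end{equation*}
lies in $(0,1]$; positivity follows from the fact that $\{u\in X:\Phi(u)\le M\}$ has positive $\mu_0$-mass for $M$ large. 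To derive the formula itself, view the joint law of $(u,y)$ as the pushforward of the product measure $\mu_0\otimes N(0,\Gamma)$ under $(u,\xi)\mapsto(u,\cG(u)+\xi)$. Disintegrating this joint law along the $y$-coordinate (the spaces are Polish, so the conditional measure is well-defined) and using the explicit Gaussian density in the $y$-marginal gives, after cancellation of the $u$-independent factor $\exp(-\tfrac12|y|_{\Gamma}^2)$ into the normaliser, the advertised expression for $d\mu^y/d\mu_0$.

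\emph{Hellinger stability.} Writing $Z=Z(y)$, $Z'=Z(y')$ and $\Phi=\Phi(u;y)$, $\Phi'=\Phi(u;y')$, the Hellinger distance expands as
\begin{equation*}
2\,d_{\mbox{\tiny{\sc hell}}}(\mu^y,\mu^{y'})^2=\int_{X}\Bigl(Z^{-1/2}\ee^{-\Phi/2}-(Z')^{-1/2}\ee^{-\Phi'/2}\Bigr)^2\mu_0(du).
\end{equation*}
I would split the integrand by adding and subtracting $Z^{-1/2}\ee^{-\Phi'/2}$, producing one term that controls the pointwise difference of exponentials and one that controls the difference of normalisers. The key pointwise bound, obtained by polarising the square in the definition \eqref{eq:phi}, is
\begin{equation*}
|\Phi(u;y)-\Phi(u;y')|\leq \tfrac12\bigl(|y|_{\Gamma}+|y'|_{\Gamma}+2|\cG(u)|_{\Gamma}\bigr)|y-y'|_{\Gamma},
\end{equation*}
which, combined with $|\ee^{-a}-\ee^{-b}|\leq|a-b|$ for $a,b\geq 0$ and with the comparable estimate $|Z-Z'|\leq C(r)|y-y'|$ obtained by integrating the same inequality against $\mu_0$, yields the bound \eqref{e:stability}. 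The constant $C(r)$ depends on $r$ only through the lower bounds $Z(y),Z(y')\geq c(r)>0$ valid uniformly over $|y|,|y'|\leq r$.

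\emph{Main obstacle.} The delicate step is integrability: Assumption \ref{ass:g} only gives $|\cG(u)|\leq c_1\|u\|_X\exp(c_2\|u\|_X)+|\cG(0)|$, so both $Z>0$ and the finiteness of $\int_X(1+|\cG(u)|_{\Gamma})^2\mu_0(du)$ require exponential moments of $\|u\|_X$ under the Gaussian prior. This is exactly where one invokes Fernique's theorem: since $\mu_0$ is Gaussian with $\mu_0(X)=1$, there exists $\alpha>0$ with $\int_X\exp(\alpha\|u\|_X^2)\mu_0(du)<\infty$, which gives finiteness of $\int_X\exp(c\|u\|_X)\mu_0(du)$ for every $c>0$. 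Once this integrability is in place, the pointwise estimates above integrate to \eqref{e:stability} by straightforward dominated-convergence and Cauchy--Schwarz arguments.
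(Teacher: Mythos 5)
The paper offers no proof of this theorem, deferring entirely to the citation \cite{CDRS08}, and your outline correctly reproduces the standard argument of that reference: measurability of $\Phi$ and positivity of $Z$ (via Fernique's theorem for the required exponential integrability of $\|u\|_{X}$ under the Gaussian prior), the derivation of \eqref{e:radon} by disintegration of the joint law, and the Hellinger stability via the splitting into a $\Phi$-difference term and a normalisation-constant term controlled by the pointwise bound $|\Phi(u;y)-\Phi(u;y')|\leq \tfrac12\bigl(|y|_{\Gamma}+|y'|_{\Gamma}+2|\cG(u)|_{\Gamma}\bigr)|y-y'|_{\Gamma}$. Your proposal is correct and takes essentially the same route as the cited source.
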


If $\mu, \, \nu$ are probability measures that are absolutely continuous with respect to the probability measure $\rho$, then the Hellinger metric is defined as
\begin{equation*}
d_{\mbox{\tiny{\sc hell}}}(\mu, \nu)^2 = 
\frac{1}{2} \int \left(  \sqrt{\frac{d \mu (u)}{d \rho}} -  \sqrt{\frac{d \nu (u)}{d \rho}} \right)^2 \, \rho (d u).
\end{equation*}
For any function of $u$ which is square integrable
with respect to both $\mu$ and $\nu$ it may be shown
that the difference in expectations of that function,
under $\mu$ and under $\nu$, is bounded above
by the Hellinger distance. In particular, this theorem
shows that the posterior mean and covariance operators
corresponding to data sets $y$ and $y'$
are ${\cal O}(|y-y'|)$ apart.

The choice of prior $\mu_0$, relates directly to
the regularization of the inverse problem. To see
this we note that since the operator $\cC$ is necessarily
positive and self-adjoint we may write down the
complete orthonormal system
\begin{equation}
\label{eq:ONB}
\frac{1}{\lambda}\cC\phi_m=\sigma_m^2 \phi_m,
\quad m \in \bbZ^+, \quad \quad \lim_{m \to \infty} \sigma_m = 0.
\end{equation}
Then $u \sim \mu_0$ can be written via the 
Karhunen-Lo\`eve expansion as
\begin{equation}
\label{eq:KL}
u(x)=\sum_{m \in \bbZ^+} \sigma_m \eta_m \phi_m(x)
\end{equation}
where the $\eta_m$ form an i.i.d. sequence of unit
Gaussian random variables. We may regularize the inverse problem by modifying the decay rate of $\sigma_m$. For example, choosing $\sigma_m=0$
for $m \notin {\cal M}$, where ${\cal M} \subset \bbZ^+$
has finite cardinality restricts the solution of
the inverse problem to a finite dimensional set, and
is hence a regularization. More generally, the rate
of decay of the $\sigma_m$ (which are necessarily summable
as $\cC$ is trace class) will effect the almost sure
regularity properties of functions drawn from $\mu_0$
and, by absolute continuity of $\mu^y$ with respect to
$\mu_0$, of functions drawn from $\mu^y.$ 

In the case that $X$ is a subset of $H = L^2(D)$ with $D \subset \bbR^d$,
the operator ${\cal C}$ may be identified with an integral operator:
$$\frac{1}{\lambda}(\cC \phi)(x_1)=\int_{D}
c(x_1,x_2)\phi(x_2)dx_2$$ for some kernel $c(x_1,x_2)$. The regularity of $c(x_1,x_2)$ determines the decay rate of $\sigma_m$ \cite{Kon86}. If $\cC = (- \Delta)^{\alpha}$ then the corresponding measure $\mu_0$ has the property that samples are almost surely in the Sobolev space $H^s$ and in the H\"older space $C^s$ for all $s<\alpha-\frac{d}{2}$ (see \cite{DPZ92} for more details). In particular, if $\alpha > d/2$, then $\mu(X) = 1$ when $X = L^\infty(D)$.

Priors which charge functions with a multiscale
character can be built in this Gaussian context. One
natural way to do this is to choose ${\cal M}$ as
above so that it contains two distinct sets of functions
varying on length scales of ${\cal O}(1)$ and
${\cal O}(\epsilon)$ respectively. A second natural
way is to choose a covariance function $c=c^{\epsilon}$
which has two scales. 

The formula \eqref{e:radon}
shows quite clearly  how regularization
works in the Bayesian context: the main contribution
to the expectation will come from places where $\Phi$
is close to its minimum value and where $\mu_0$ is concentrated;
thus minimizing $\Phi$ is important, but this
minimization is regularized through 
the properties of the measure $\mu_0$. We now develop this intuitive concept further
by linking the Bayesian approach to Tikhonov
regularization and the functional $I$ given by
\eqref{eq:IT}.

Given $z \in E$ and $\delta \ll 1$
define the small ball probability 
$$J^{\delta}(z)=\bP^{\mu^y}\bigl(\|u-z\|_{X}<\delta\bigr).$$
Note that this ball is in $X$ but centred at a point
$z \in E$, with $E$ (the Cameron-Martin space) compact in $X$. 
It is natural to ask where $J^{\delta}(z)$ is maximized
as a function of $z$ and placing $z$ in $E$ allows
us to answer this question. Furthermore
we then see a connection between
the Bayesian approach and the Tikhonov approach
to regularization. The next theorem
shows that small balls centred at
minimizers of \eqref{eq:IT} will have
maximal relative probability under the Bayesian
posterior measure, in the small ball limit
$\delta \to 0.$

\begin{theorem} (\cite{DS10})
\label{t:sb}
Assume that $\mu_0(X)=1$. Then
$$\lim_{\delta \to 0} \frac{J^{\delta}(z_1)}
{J^{\delta}(z_2)}=\exp\bigl(I(z_2)-I(z_1)\bigr).$$
\end{theorem}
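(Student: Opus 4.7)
The plan is to apply the Cameron-Martin translation theorem to shift the small $X$-ball at $z_i$ back to a small ball at the origin, thereby exposing the $\frac{\lambda}{2}\|z_i\|_E^2$ contribution, and then exploit continuity of $\Phi$ together with the symmetry of the centered Gaussian $\mu_0$ to handle the remaining terms.

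First I would use the Radon-Nikodym derivative \eqref{e:radon} to write
\begin{equation*}
J^{\delta}(z_i) \;=\; \frac{1}{Z}\int_{\{u\in X:\|u-z_i\|_X<\delta\}} \exp\bigl(-\Phi(u)\bigr)\,\mu_0(du).
\end{equation*}
Since $z_i\in E$ lies in the Cameron-Martin space of $\mu_0=N(0,\cC/\lambda)$, the pushforward of $\mu_0$ under the translation $u\mapsto u-z_i$ is absolutely continuous with respect to $\mu_0$ (see \cite{bog98,Lif95}). Performing the change of variable $v=u-z_i$ and invoking this Cameron-Martin formula gives
\begin{equation*}
J^{\delta}(z_i) \;=\; \frac{1}{Z}\exp\Bigl(-\tfrac{\lambda}{2}\|z_i\|_E^2\Bigr)\int_{\{\|v\|_X<\delta\}} \exp\bigl(-\Phi(v+z_i)-\lambda\langle z_i,v\rangle_E^{\sim}\bigr)\,\mu_0(dv),
\end{equation*}
where $\langle z_i,\cdot\rangle_E^{\sim}$ denotes the measurable linear extension of the $E$-inner product (the Paley-Wiener functional), which is Gaussian with variance $\|z_i\|_E^2$ under $\mu_0$.

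Forming the ratio, the prefactor yields exactly $\exp\bigl(-\tfrac{\lambda}{2}(\|z_1\|_E^2-\|z_2\|_E^2)\bigr)$, which matches the quadratic part of $I(z_2)-I(z_1)$. It then remains to show that
\begin{equation*}
\frac{\int_{B_\delta} \exp\bigl(-\Phi(v+z_1)-\lambda\langle z_1,v\rangle_E^{\sim}\bigr)\,\mu_0(dv)}{\int_{B_\delta} \exp\bigl(-\Phi(v+z_2)-\lambda\langle z_2,v\rangle_E^{\sim}\bigr)\,\mu_0(dv)} \;\longrightarrow\; \exp\bigl(\Phi(z_2)-\Phi(z_1)\bigr),
\end{equation*}
where $B_\delta=\{v\in X:\|v\|_X<\delta\}$. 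For the $\Phi$-piece this is immediate: Assumption~\ref{ass:g} makes $\Phi$ continuous on $X$, so $\Phi(v+z_i)\to\Phi(z_i)$ uniformly on $B_\delta$ as $\delta\to 0$, and I would pull out $\exp(-\Phi(z_i))$ up to a $(1+o(1))$ factor.

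The main obstacle, and the only really substantive step, is the linear Cameron-Martin term $\lambda\langle z_i,v\rangle_E^{\sim}$, which is \emph{not} pointwise small when $v$ is small only in $X$. To handle it I would use the symmetry of the centered Gaussian $\mu_0$ and of $B_\delta$ under $v\mapsto -v$: averaging the integrand with its reflection converts the exponential of the linear term into a $\cosh$, giving
\begin{equation*}
A_\delta(z_i) \;\sim\; e^{-\Phi(z_i)}\int_{B_\delta} \cosh\bigl(\lambda\langle z_i,v\rangle_E^{\sim}\bigr)\,\mu_0(dv)
\end{equation*}
as $\delta\to 0$. The final task is to show the ratio of these $\cosh$-integrals tends to $1$. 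Since $\cosh\ge 1$, the ratio is bounded below by $(\int_{B_\delta}\mu_0)/(\int_{B_\delta}\cosh(L_2)\,\mu_0)$; one then bounds $\cosh(L_i(v))$ on $B_\delta$ using a Fernique-type tail estimate on the measurable linear functional $L_i$, together with the fact that $\mu_0(B_\delta)$ decays sub-exponentially in the appropriate sense. This yields $\int_{B_\delta}\cosh(L_i)\,\mu_0/\mu_0(B_\delta)\to 1$, so both ratios tend to $1$, and combining everything gives the claimed limit $\exp\bigl(I(z_2)-I(z_1)\bigr)$. I expect that making this last Fernique/small-ball estimate fully rigorous, as carried out in \cite{DS10}, is the genuinely nontrivial part.
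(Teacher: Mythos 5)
A preliminary remark: the paper does not actually prove this theorem --- it is quoted from \cite{DS10}, which is listed as ``in preparation'' --- so there is no in-paper argument to compare against. Your sketch follows the architecture of the proof that appears in the subsequent MAP-estimator literature: write $J^{\delta}(z_i)$ via the Radon--Nikodym derivative \eqref{e:radon}, shift by the Cameron--Martin formula to expose the factor $\exp(-\tfrac{\lambda}{2}\|z_i\|_E^2)$, use the local Lipschitz continuity of $\Phi$ on $X$ (from Assumption~\ref{ass:g}) to replace $\Phi(v+z_i)$ by $\Phi(z_i)$ uniformly over $B_\delta$, and symmetrize the Paley--Wiener term into a $\cosh$. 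All of that is correct, modulo bookkeeping: with prior covariance $\cC/\lambda$ the Cameron--Martin functional is $\lambda\langle z_i,\cdot\rangle_E^{\sim}$ with variance $\lambda\|z_i\|_E^2$ rather than $\|z_i\|_E^2$, which does not affect the structure.

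The one genuine gap is the mechanism you propose for the final step, namely that a Fernique-type tail bound on $L_i=\lambda\langle z_i,\cdot\rangle_E^{\sim}$ together with the decay of $\mu_0(B_\delta)$ yields $\int_{B_\delta}\cosh(L_i)\,d\mu_0/\mu_0(B_\delta)\to1$. As stated this does not close. Splitting at a level $T$ gives $\int_{B_\delta}\cosh(L_i)\,d\mu_0\le\cosh(T)\,\mu_0(B_\delta)+\int_{\{|L_i|>T\}}\cosh(L_i)\,d\mu_0$; for fixed $T$ the second term is a fixed positive constant (Fernique controls it but does not make it small), while $\mu_0(B_\delta)\to0$ in infinite dimensions, so the remainder is not $o(\mu_0(B_\delta))$, and shrinking $T$ with $\delta$ only makes the tail term worse. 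What is actually needed is control of the tail of $L_i$ \emph{conditional on} the symmetric convex set $B_\delta$, uniformly in $\delta$: for instance the Khatri--\v{S}id\'ak correlation inequality gives $\mu_0\bigl(B_\delta\cap\{|L_i|>t\}\bigr)\le\mu_0(B_\delta)\,\mu_0\bigl(|L_i|>t\bigr)$, which supplies uniform exponential integrability of $L_i$ under $\mu_0(\cdot\,|\,B_\delta)$, and this must be combined with an argument that the conditional law of $L_i$ given $B_\delta$ concentrates at $0$ as $\delta\to0$ (transparent in the diagonal Karhunen--Lo\`eve setting \eqref{eq:KL}, where $B_\delta$ factorizes, but requiring care for a general norm on $X$). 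You correctly identify this as the nontrivial step, but the tool you name is not the one that carries it.
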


In the Bayesian context the solution of the
Tikhonov regularized problem is known as
the Maximum A Posteriori
estimator (MAP estimator) \cite{Berger, Fitzp91}.

%%%%%%%%%%%%%%%%%%%%%%%%%%%%%%%%%%%
%%%%%%%%%%%%%%%%%%%%%%%%%%%%%%%%%%%
%%%%%%%%%%%%%%%%%%%%%%%%%%%%%%%%%%%
\section{Large Data Limits}
\label{sec:invm}

In the previous section we showed how regularization
plays a significant role in the solution of inverse
problems. Choosing the correct regularization is
part of the overall modelling scenario in which
the inverse problem is embedded, as we demonstrated
in the running example of Section \ref{ssec:g}.
In some situations it may be suitable to look for
the solution of the inverse problem over a small
finite set of parameters, whilst in others it may
be desirable to look over a larger, even infinite
dimensional set, in which oscillations are captured.

This section is devoted
entirely to inverse problems where a single
scalar parameter is sought and we study whether
or not this parameter is correctly identified
when a large amount of noisy data is available.
The development is tied specifically to the
running example, namely the PDE \eqref{eq:pde1}. For a fixed permeability coefficient generating the data, Fitzpatrick has also studied the consistency and asymptotic normality of maximum likelihood estimates in the large data limit \cite{Fitzp91}. Related work on parameter estimation in the context stochastic differential equations (SDEs) may
be found in \cite{PavlSt06,APavlSt09}.

\subsection{The Statistical Model}
\label{ssec:ldata}

We consider the problem of estimating a single scalar
parameter $u \in \bbR$ in the elliptic PDE 
\begin{align}
\begin{split}
\label{eq:pde-1}
\nabla \cdot v&=f,\quad x \in D,\\
p&=0, \quad x \in \partial D,\\
v&=-\exp(u)A \nabla p
\end{split}
\end{align}
where $D \subset \bbR^d$ is bounded and open, and $f \in H^{-1}$ as well as the constant symmetric matrix $A$ are assumed to be known. We let $G:\bbR \to H^1_0(D)$ be defined by $G(u)=p.$ Then using the same linear functionals as in the running example from Section \ref{ssec:g} we may construct the observation operator $\cG:\bbR \to \bbR^N$ defined by $\cG(u)_j=\ell_j(G(u)).$ Our aim is to solve the inverse problem of determining $u$ given $y$ satisfying \eqref{eq:data}. For simplicity we assume that $\xi \sim N(0,\gamma^2 I)$ which implies that the observational noise on each linear functional is i.i.d. $N(0,\gamma^2).$ Since $u$ is finite dimensional we will simply minimize $\Phi$ given by \eqref{eq:phi}: no further regularization is needed because $u$ is already
finite dimensional.

Notice that the solution $p$ of \eqref{eq:pde-1} is linear in $\exp(-u)$ and that we may write $G(u)=\exp(-u)\ps$ where $\ps$ solves 
\begin{align}
\begin{split}
\label{eq:pde-2}
\nabla \cdot v&=f,\quad x \in D,\\
\ps&=0, \quad x \in \partial D.\\
v&= -A \nabla \ps
\end{split}
\end{align}

Note that
$\cG(u)_j=\exp(-u)\ell_j(\ps)$ 
so that the least squares functional \eqref{eq:phi} has
the form
$$\Phi(u)=
\frac{1}{2\gamma^2}\sum_{j=1}^N
|y_j-{\cal G}_j(u)|^2
=\frac{1}{2\gamma^2}\sum_{j=1}^N
|y_j-\exp(-u)\ell_j(\ps)|^2.$$
It is straightforward to see that $\Phi$ has
a unique minimizer $\bu$ satisfying
\begin{equation}
\exp(-\bu)=\frac{\sum_{j=1}^{N} y_j \ell_j(\ps)}
{\sum_{j=1}^{N} \ell_j(\ps)^2}.
\label{eq:est}
\end{equation}
It is now natural to ask whether, for large $N$,
the estimate $\bu$ is close to the desired value
of the parameter. We study two situations: the
first where the data is generated by the model
which is used to fit the data; and the second where
the data is generated by a multiscale model whose
homogenized limit gives the model which is used to 
fit the data.

\subsection{Data From the Homogenized Model} 
\label{ssec:homd}

We define $\po=\exp(-\uo)\ps$ so that $\po$ solves 
\eqref{eq:pde-1} with $u=u_0.$

\begin{assumption}
\label{ass:data1}
We assume that the data $y$ is generated from
noisy observations generated by the statistical model:
$$y_j=\ell_j(\po)+\xi_j$$
where $\{\xi_j\}$ form an i.i.d. sequence of random
variables distributed as $N(0,\gamma^2).$
\end{assumption}

\begin{theorem}
\label{t:con1}
Let Assumptions \ref{ass:data1} hold and assume that
$\liminf_{N \to \infty} \frac{1}{N}\sum_{j=1}^N \ell_j(\ps)^2 \geq L > 0$ as $N \to \infty.$ Then
$\xi$-almost surely 
$$\lim_{N \to \infty}
|\exp(-\bu)-\exp(-u_0)|=0.$$
\end{theorem}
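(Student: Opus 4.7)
The plan is to exploit the explicit closed form \eqref{eq:est} for the estimator $\bar u$ together with the linearity of $G$ in $\exp(-u)$, reducing the statement to a classical strong law of large numbers for a weighted sum of i.i.d.\ Gaussians. First I would substitute the data model from Assumption~\ref{ass:data1}, namely $y_j=\ell_j(p_0)+\xi_j=\exp(-u_0)\ell_j(\ps)+\xi_j$, directly into \eqref{eq:est}. Writing $a_j:=\ell_j(\ps)$ and $S_N:=\sum_{j=1}^N a_j^2$, the numerator splits and gives the exact identity
\begin{equation*}
\exp(-\bar u)-\exp(-u_0)=\frac{1}{S_N}\sum_{j=1}^N \xi_j\, a_j.
\end{equation*}
The theorem then reduces to showing that this quotient tends to zero $\xi$-almost surely.

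Next I would observe that $Z_N:=\sum_{j=1}^N \xi_j a_j$ is a sum of independent mean-zero Gaussians with $\mathrm{Var}(\xi_j a_j)=\gamma^2 a_j^2$. To apply Kolmogorov's convergence criterion, consider the series $\sum_{j\ge 1}\xi_j a_j/S_j$ of independent mean-zero random variables, whose variances add up to $\gamma^2\sum_{j\ge 1} a_j^2/S_j^2$. Here I would use the purely deterministic telescoping bound
\begin{equation*}
\frac{a_j^2}{S_j^2}=\frac{S_j-S_{j-1}}{S_j^2}\leq \frac{S_j-S_{j-1}}{S_{j-1}S_j}=\frac{1}{S_{j-1}}-\frac{1}{S_j},
\end{equation*}
valid for $j\ge 2$, which immediately gives $\sum_{j\ge 2}a_j^2/S_j^2\le 1/S_1<\infty$ as long as $S_1>0$ (and if $S_1=0$ one just starts the telescoping at the first index where $S_j>0$, which exists because the hypothesis $\liminf S_N/N\ge L>0$ forces $S_N\to\infty$). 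By Kolmogorov's one-series theorem, $\sum_{j\ge 1}\xi_j a_j/S_j$ converges almost surely. Since the hypothesis $\liminf S_N/N\ge L>0$ gives $S_N\to\infty$, Kronecker's lemma then yields $Z_N/S_N\to 0$ almost surely.

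Combining this with the identity above produces $\exp(-\bar u)-\exp(-u_0)\to 0$ almost surely, which is the claimed conclusion. There is no serious obstacle: the estimator is linear in the data in the natural variable $\exp(-u)$, so the only analytic content is Kolmogorov's strong law for weighted Gaussian sums, and the deterministic hypothesis on $S_N$ supplies exactly the growth needed for Kronecker's lemma. The mild subtlety worth flagging is that the telescoping bound on $\sum a_j^2/S_j^2$ does not require any upper bound on the individual $a_j$'s, only that $S_N\to\infty$, so the linear-growth hypothesis is used only in the final Kronecker step and not in establishing summability.
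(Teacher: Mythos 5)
Your proposal is correct. The reduction is identical to the paper's: substituting $y_j=\exp(-u_0)\ell_j(\ps)+\xi_j$ into \eqref{eq:est} yields exactly the error term $I_1 = S_N^{-1}\sum_j \xi_j a_j$ with $a_j=\ell_j(\ps)$, $S_N=\sum_j a_j^2$. Where you genuinely diverge is in proving $I_1\to 0$ almost surely. The paper computes $\bbE[I_1^2]=\gamma^2/S_N\le 2\gamma^2/(NL)$, invokes Gaussianity of $I_1$ to get $\bbE[I_1^{2p}]={\cal O}(N^{-p})$, and applies Borel--Cantelli (with, say, $p=2$, so the fourth-moment tail bounds are summable in $N$). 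You instead apply Kolmogorov's convergence criterion to $\sum_j \xi_j a_j/S_j$ via the telescoping bound $\sum_j a_j^2/S_j^2<\infty$, and then Kronecker's lemma. Both arguments are sound; the trade-offs are worth noting. Your route uses only independence and finite variance of the $\xi_j$, so it extends verbatim to non-Gaussian noise, and --- as you correctly flag --- it needs only $S_N\to\infty$ rather than linear growth, so the hypothesis $\liminf_N S_N/N\ge L>0$ is used in a weaker form than in the paper's proof (where the ${\cal O}(N^{-p})$ decay, hence summability for Borel--Cantelli, genuinely relies on $S_N\gtrsim N$). The paper's route is shorter and self-contained given Gaussianity, but is less robust to the noise model. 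Your handling of the degenerate case $S_1=0$ (starting the telescoping at the first index with $S_j>0$, which exists by the growth hypothesis) is a detail the paper does not need to address but which your argument correctly covers.
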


\begin{proof}
Substituting the assumed expression for
the data from Assumption \ref{ass:data1}
into the formula \eqref{eq:est} gives
$$\exp(-\bu)=\exp(-\uo)+I_1$$
where
$$I_1=\frac{\frac{1}{N}\sum_{j=1}^N \xi_j\ell_j(\ps)}{\frac{1}{N}\sum_{j=1}^N \ell_j(\ps)^2}.$$
Therefore,
\begin{equation}
\bbE [I_1^2] = \frac{\gamma^2}{ \sum_{j=1}^N \ell_j(p^*)^2 } \leq  \frac{2\gamma^2}{NL}
\end{equation}
for $N$ sufficiently large. Since
$I_1$ is Gaussian we deduce that $\bbE I_1^{2p}={\cal O}(N^{-p})$ as $N \to \infty$. Application of the Borel-Cantelli lemma shows that
$I_1$ converges almost surely to zero as $N \to \infty$.
\qed
\end{proof}

This shows that, in the large data limit, random
observational error may be averaged out and the
true value of the parameter recovered, in the
idealized scenario where the data is taken from
the statistical model used to identify the
parameter. The condition that $L > 0$ prevents additional observation noise from overwhelming the information obtained from additional measurements as $N \to \infty$. It is a simple explicit example of
what is known as {\em posterior consistency}
\cite{bickel} in the theory of statistics.

\subsection{Data From the Multiscale Model} 
\label{ssec:muld}

In practice, of course, real data does not come
from the statistical model used to estimate
parameters. In order to probe the effect that
this can have on posterior consistency we
study the situation where the data is taken from
a multiscale model whose homogenized limit falls
within the class used in the statistical model
to estimate parameters.
Again we define $\po=\exp(-\uo)\ps$
and we now define $\pe$ to solve \eqref{eq:pde22}
with $K^{\epsilon}$ chosen so that the homogenized
coefficient associated with this family is 
$K_0=\exp(u_0)A.$

\begin{assumption}
\label{ass:data}
We assume that the data $y$ is generated from
noisy observations of a multiscale model:
$$y_j=\ell_j(\pe)+\xi_j$$
with $\pe$ as above and 
the $\{\xi_j\}$ an i.i.d. sequence of random
variables distributed as $N(0,\gamma^2).$
\end{assumption}

\begin{theorem}
\label{t:con}
Let Assumptions \ref{ass:data} hold and assume that
that the linear functionals $\ell_j$ are chosen so that
\begin{equation}\label{Nepsdoublelim}
\lim_{\epsilon \to 0} \limsup_{N \to \infty} \frac{1}{N}\sum_{j=1}^N |\ell_j(\pe-\po)|^2 = 0
\end{equation}
and $\liminf_{N \to \infty} \frac{1}{N}\sum_{j=1}^N \ell_j(\ps)^2 \geq L > 0$ as $N \to \infty.$ Then
$\xi-$ almost surely 
$$\lim_{\epsilon \to 0} \lim_{N \to \infty}
|\exp(-\bu)-\exp(-u_0)|=0.$$
\end{theorem}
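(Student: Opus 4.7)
The plan is to mimic the argument of Theorem \ref{t:con1}, splitting the estimator error into a stochastic piece (the observational noise) and a deterministic bias piece coming from the homogenization gap $\pe-\po$. The first is handled by the Borel--Cantelli argument already used; the second will be handled by Cauchy--Schwarz together with the assumed double-limit bound \eqref{Nepsdoublelim}.

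First I would substitute the data model $y_j = \ell_j(\pe) + \xi_j$ from Assumption \ref{ass:data} into the explicit formula \eqref{eq:est} for $\bar u$. Using $\po = \exp(-u_0)\ps$ and writing $\ell_j(\pe) = \exp(-u_0)\ell_j(\ps) + \ell_j(\pe-\po)$, I get
\begin{equation*}
\exp(-\bar u) = \exp(-u_0) + I_1 + I_2,
\end{equation*}
where
\begin{equation*}
I_1 = \frac{\frac{1}{N}\sum_{j=1}^N \xi_j \ell_j(\ps)}{\frac{1}{N}\sum_{j=1}^N \ell_j(\ps)^2}, \qquad I_2 = \frac{\frac{1}{N}\sum_{j=1}^N \ell_j(\pe-\po)\ell_j(\ps)}{\frac{1}{N}\sum_{j=1}^N \ell_j(\ps)^2}.
\end{equation*}

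Next I would dispose of $I_1$. For fixed $\eps>0$, the functionals $\ell_j(\ps)$ are deterministic, so $I_1$ is Gaussian with variance bounded by $\gamma^2 / \bigl(\sum_j \ell_j(\ps)^2\bigr)$, which is $O(N^{-1})$ once $N$ is large enough that $\frac{1}{N}\sum \ell_j(\ps)^2 \geq L/2$ by the liminf hypothesis. Gaussianity gives $\bbE I_1^{2p} = O(N^{-p})$ for every $p$, so Borel--Cantelli yields $I_1 \to 0$ almost surely as $N\to\infty$, exactly as in the proof of Theorem \ref{t:con1}.

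For $I_2$ I would apply the Cauchy--Schwarz inequality to the numerator:
\begin{equation*}
|I_2| \leq \frac{\Bigl(\frac{1}{N}\sum_{j=1}^N |\ell_j(\pe-\po)|^2\Bigr)^{1/2} \Bigl(\frac{1}{N}\sum_{j=1}^N \ell_j(\ps)^2\Bigr)^{1/2}}{\frac{1}{N}\sum_{j=1}^N \ell_j(\ps)^2} = \left(\frac{\frac{1}{N}\sum_{j=1}^N |\ell_j(\pe-\po)|^2}{\frac{1}{N}\sum_{j=1}^N \ell_j(\ps)^2}\right)^{1/2}.
\end{equation*}
Taking $\limsup_{N\to\infty}$ of the right-hand side and then $\lim_{\eps\to 0}$, the denominator stays bounded below by $L$ while the numerator tends to zero by hypothesis \eqref{Nepsdoublelim}, so $I_2 \to 0$ in this iterated limit.

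Combining $I_1\to 0$ a.s.\ (as $N\to\infty$, for each fixed $\eps$) and $I_2\to 0$ (in the iterated limit) gives the claim. The mildly delicate point is the order of limits: $I_1$ depends on $N$ only (the noise is independent of $\eps$), while $I_2$ requires first taking $N\to\infty$ to expose $\limsup_N \frac{1}{N}\sum|\ell_j(\pe-\po)|^2$ and then sending $\eps\to 0$, which matches the iterated limit stated in the theorem. I do not expect any serious obstacle; the main content is simply the Cauchy--Schwarz decoupling that lets the deterministic bias term be controlled by the hypothesis \eqref{Nepsdoublelim}, which is precisely tailored so that homogenization-type convergence of $\pe$ to $\po$ (compare Corollary \ref{cor:homcon}) is compatible with averaging over a growing number of observations.
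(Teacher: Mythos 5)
Your argument is correct and is essentially identical to the paper's proof: the same decomposition $\exp(-\bu)=\exp(-\uo)+I_1+I_2$, the same reuse of the Borel--Cantelli treatment of $I_1$ from Theorem \ref{t:con1}, and the same Cauchy--Schwarz bound on $I_2$ combined with the lower bound $L$ and hypothesis \eqref{Nepsdoublelim} in the iterated limit. No substantive differences.
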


\begin{proof}
Notice that the solution of the homogenized equation
is $\po=\exp(-u_0)\ps.$ We write
\begin{align*}
y_j&=\ell_j(\po)+\ell_j(\pe-\po)+\xi_j\\
&=\exp(-u_0)\ell_j(\ps)+\ell_j(\pe-\po)+\xi_j.
\end{align*}
Substituting this into the formula \eqref{eq:est}
gives
$$\exp(-\bu)=\exp(-\uo)+I_1+I_2^\epsilon$$
where $I_1$ is as defined in the proof of Theorem
\ref{t:con1} and is independent of $\epsilon$, and
$$I_2^\epsilon=
\frac{\sum_{j=1}^N \ell_j(\pe-\po)\ell_j(\ps)}{\sum_{j=1}^N \ell_j(\ps)^2}.$$
The Cauchy-Schwarz inequality gives
\[
|I_2^\epsilon| \le \frac{\Bigl(\sum_{j=1}^N |\ell_j(\pe-\po)|^2\Bigr)^{1/2}}{\left(  \sum_{j=1}^N \ell_j(\ps)^2 \right)^{1/2} } \leq \Bigl(\frac{2}{NL}\sum_{j=1}^N |\ell_j(\pe-\po)|^2\Bigr)^{1/2}
\] 
for $N$ sufficiently large. As in the proof of Theorem \ref{t:con1} we have, $\xi$-almost
surely,
$$\lim_{N \to 0}|\exp(-\bu)-\exp(-\uo)-I_2^\epsilon|=0.$$
From this and (\ref{Nepsdoublelim}) the desired result now follows.
\qed
\end{proof}

The assumption (\ref{Nepsdoublelim}) encodes the idea that, for small $\epsilon$,
the linear functionals used in the observation
process return nearby values when
applied to the solution $\pe$ of the multiscale model
or to the solution $\po$ of the homogenized equation. In particular, Corollary \ref{cor:homcon} implies that if $\{ \ell_j(p) \}_{j=1}^\infty$ is a family of bounded linear functionals on $L^2(D)$, uniformly bounded in $j$, then (\ref{Nepsdoublelim}) will hold. On the other hand, we may choose linear functionals that are bounded as functionals on $H^1(D)$ yet unbounded on $L^2(D)$. In this case Theorem \ref{t:homcon} shows that (\ref{Nepsdoublelim}) may not hold and the correct
homogenized coefficient may not be recovered, even in
the large data limit.  An analogous phenomenon
occurs in inference for SDEs where if the observations
of a multiscale diffusion are too frequent 
(relative to the fast scale) then the correct homogenized
coefficients are not recovered \cite{PavlSt06,APavlSt09}.

%%%%%%%%%%%%%%%%%%%%%%%%%%%%%%%%%%%
%%%%%%%%%%%%%%%%%%%%%%%%%%%%%%%%%%%
%%%%%%%%%%%%%%%%%%%%%%%%%%%%%%%%%%%
\section{Exploiting Multiscale Properties Within Inverse
Estimation}
\label{sec:invm2}

In this section we describe how ideas
from homogenization theory can be
used to improve the estimation of parameters
in homogenized models. We consider a regime where the unknown parameter has small-scale fluctuations that may be characterized as random. In this case, if we attempt to recover the homogenized parameter the error $\xi$ appearing in \eqref{eq:data} is affected by the model mismatch. This is because the simplified, low-dimensional parameter used to fit the data is different from the true unknown coefficient. So, even when there is no observational noise, the error $\xi$ has a statistical structure. Nevertheless, homogenization theory predicts that this discrepancy between $G(u)$ and $y$ associated with model mismatch will have a universal statistical structure which can be exploited in the inverse problem, as we now describe. 

The specific ideas described here were developed by Nolen and Papanicolaou 
in \cite{NP09} for one dimensional elliptic problems, including the groundwater flow problem that we study here. Bal and Ren \cite{BalRen09} have employed similar ideas in the study of Sturm-Liouville problems with unknown potential. We begin by describing in Section \ref{ssec:rhf} the homogenization and fluctuation theory for the case that the (scalar) permeability $k(x)$ is random. Then,
in Section \ref{ssec:ies} we show how these
ideas can be used to develop an improved
estimator for the homogenized permeability coefficient. We conclude with numerical
results in Section \ref{ssec:num}.

\subsection{The Model}
\label{ssec:rhf}
In this section we will present the approach of \cite{NP09} in the simplest possible setting.
We consider the two-point boundary value problem
\begin{subequations}
\label{e:pde_nol_pap2}
\begin{eqnarray}
- \frac{d}{d x} \left( \exp(u(x)) \frac{d p}{d x} \right)  &=& f(x), \quad x \in [-1,1], \\
p(-1) =p(1) &=& 0.
\end{eqnarray}
\end{subequations}
This is, of course, \eqref{eq:pde1} in the
one-dimensional setting $d=1.$

It is assumed that the coefficient $k(x) = \exp(u(x))$ is a single
realization of a stationary, ergodic and mixing random
field $k(x,\omega).$ Furthermore it is assumed that
$k^{-1}$ can be decomposed into a slowly varying non-random component,
together with a random, rapidly oscillating component:
\be\label{e:coeff_model}
\frac{1}{k(x, \omega)} = \frac{1}{k_0(x)} + \sigma \mu \left(\frac{x}{\eps}, \omega  \right),
\ene
where $\mu (x,\omega)$ is a stationary, mean zero random field with covariance
\begin{equation*}
R(x) = \Ex (\mu(x+y) \mu(y)).
\end{equation*}
We assume that $R(0) = 1$ and $\int_{\bbR} R(x) \, dx =1$. Thus, $\sigma^2$ and $\eps$ are the (given) variance and correlation length of the fluctuations. We are interested in the
case where $\eps \ll 1$ so that the random fluctuations are
rapid.

The solution $p = p_\epsilon(x,\omega)$ of~\eqref{e:pde_nol_pap2} depends on $\epsilon > 0$ and on the realization of $k(x,\omega)$. However, in the limit as $\eps \rightarrow 0$, $p_\epsilon$ coverges to $p_0(x)$ which is the solution of the homogenized Dirichlet problem 
\begin{subequations}
\label{e:pde_homog}
\begin{eqnarray}
- \frac{d}{dx} \left( k_0(x) \frac{d}{dx} p_0 \right) &=& f(x), \quad x \in [-1,1],
\\
\po(-1) =\po(1) &=& 0.
\end{eqnarray}
\end{subequations}
Observe that the homogenized coefficient is the harmonic mean of $k$: $k_0(x) = \Ex[k^{-1}]^{-1}$. Moreover,  in the limit as $\eps \rightarrow 0$, the solution $p_\epsilon$ has Gaussian fluctuations about its asymptotic limit \cite{BGMP08}. Specifically, one can prove that
\be\label{e:lim_thm}
\frac{p_\epsilon(x, \omega) - \po(x)}{\eps^{1/2}} \rightarrow \sigma \int_D Q(x,y ; k_0) v_0(y;k_0) \, d W_y (\omega)
\ene
in distribution as $\eps \rightarrow 0$, where $W_y (\omega)$ is a Brownian random field, which is a Gaussian process. Here $v_0(x;k_0) = k_0(x) p_0(x)$, and the kernel $Q(x,y ; k_0)$ is then 
related to the Green's function for the one dimensional system:
\begin{eqnarray*}
\left( \begin{array}{c} p_x \\ v_x \end{array} \right) - \left( \begin{array}{cc} 0 & 1/k_0(x) \\ 0 & 0 \end{array} \right)\left( \begin{array}{c} p \\ v \end{array} \right) = \left( \begin{array}{c} g_1 \\ g_2 \end{array} \right).
\end{eqnarray*}
If the $2\times2$ Green's matrix for this system is $G(x,y;k_0):D \times D \to \R^2 \otimes \R^2$, then $Q(x,y;k_0) = G_{1,1}(x,y;k_0)$. The important point here is that the integral 
\[
I(x,\omega) = \sigma \int_D Q(x,y ; k_0) v_0(y;k_0) \, d W_y (\omega)
\]
which appears on the right side of (\ref{e:lim_thm}) is a centered Gaussian random variable with covariance
\[
\Ex[I(x)I(z)] = \sigma^2 \int_D Q(x,y;k_0) v_0(y;k_0)^2 Q(y,z;k_0) \,dy.
\]
This covariance depends on  $k_0.$ 
%and $v(y;k_0)$. 
The asymptotic theory given by the limit theorem
\eqref{e:lim_thm}
gives us a good approximation of the statistics of $p_\epsilon(x,\omega)$ even when there is no observation noise, and
shows that the fluctuations depend on $k_0.$ 
In this simple case presented here, $Q$ can be computed explicitly. In other cases, it can be computed numerically; see \cite{NP09} for more details.

\subsection{Enhanced Estimation}
\label{ssec:ies}

We now show how this asymptotic theory can be
used to enhance estimation of the homogenized parameter $k_0(x)$. The inverse problem is to identify the parameter
$k_0(x)$ in the model 
\begin{subequations}
\label{e:homogr}
\begin{eqnarray}
- \frac{d}{dx} \left( k_0(x) \frac{d}{dx} p_0 \right) &=& f(x), \quad x \in [-1,1],\\
p_0(-1) =p_0(1) &=& 0.
\end{eqnarray}
\end{subequations}
We take the viewpoint that the data actually come from observations of $p_\epsilon(x,\omega)$, which is the solution of the multiscale model \eqref{e:pde_nol_pap2} with $k(x,\omega)$ given by \eqref{e:coeff_model}, so there is a discrepancy between the model used to fit the data and the true model which generates the data. Now the outstanding modelling issue is the choice of statistical model for the error $\xi$ in~\eqref{eq:data}.

Suppose we make noisy observations of $p_\epsilon(x_j)$ at points $\{ x_j \}_{j=1}^N$ distributed throughout the domain. Then the measurements are
\[
y_j = p_\epsilon(x_j,\omega) + \xi_j, \quad j=1,\dots,N
\]
where $\xi_j \sim N(0,\gamma^2)$ are mutually independent, representing observation noise. The limit (\ref{e:lim_thm}) we have just described tells us that for $\epsilon$ small, these measurements are approximated well by
\[
y_j \approx p_0(x_j) + \xi_j',
\]
where $\{\xi_j'\}_{j=1}^N$ are Gaussian random variables with mean zero
and covariance
\begin{equation}
C_{j,\ell}(k_0,\epsilon) = \Ex[\xi_j' \xi_\ell'] =  \gamma^2 \delta_{j,\ell}  + \epsilon \sigma^2 \int_D Q(x_j,y;k_0) v_0(y;k_0)^2 Q(x_\ell,z;k_0) \,dy \label{covobshom}
\end{equation}
Therefore, we model the observations as
\[
y_j \approx \cG(k_0) + \xi_j', \quad j=1,\dots,N
\]
where $\cG(k_0) = p_0(x_j;k_0)$ with $p_0$ being the solution of $(\ref{e:homogr})$. The modified statistical error $\xi'$ has two components.  The first term $\gamma^2 \delta_{j,\ell} $ is due to observation error. The second term comes from the asymptotic theory and is associated with the random microstructure in the true parameter $k(x,\omega)$. Of course, if $\epsilon$ is very small, relative to $\gamma^2$, then the observation noise dominates (\ref{covobshom}). In this case, the observations of $p_\epsilon$ may be very close to observations of the homogenized solution $p_0$, and we might simply assume that $\xi' \sim N(0,\gamma^2 I)$, ignoring the error associated with the model mismatch.  On the other hand, if $\gamma^2$ is small relative to $\epsilon$ then the statistical error $\xi'$ is dominated by the model mismatch. In this case, homogenization theory gives us an asymptotic approximation of the true covariance structure of $\xi'$, which is quite different from $N(0,\gamma^2 I)$. See \cite{NP09} for a discussion of some properties of the covariance matrix $C(k_0,\epsilon)$.

Using the covariance (\ref{covobshom}), we make the approximation 
\begin{equation*}
\bbP (y|k_0) \approx \frac{1}{\sqrt{2\pi|C(k_0;\epsilon)|}}\exp \Bigl(-\frac{1}{2} \bigl(y - \cG(k_0)\bigr)^T C(k_0;\epsilon)^{-1} \bigl(y- \cG(k_0)\bigr) \Bigr),
\end{equation*}
where $|\cdot|$ denotes the
determinant. The parameter $k_0(x)$ is a function, in general, and we may place a Gaussian prior $\mu_0$ on $u_0(x) = \log k_0(x)$. Application of Bayes' theorem  
\eqref{eq:bayes} (with $k_0$ replacing $u$) gives that
\be\nonumber
\bbP(k_0|y) \propto \frac{1}{\sqrt{2\pi|C(k_0;\epsilon)|}}\exp \Bigl(-\frac{1}{2} \bigl(y - \cG(k_0)\bigr)^T C(k_0;\epsilon)^{-1} \bigl(y- \cG(k_0)\bigr) \Bigr)\mu_0(\log k_0) \label{mapP}
\ene
where the constant of proportionality is independent
of $k_0$. The maximum a posteriori estimator (MAP) is then found as the function $k_0(x)$
which maximizes $\bbP(k_0|y)$ which is the same as minimizing $I(k_0)=-\ln\bigl(\bbP(k_0|y)\bigr).$
The key contribution of homogenization theory is
to correctly identify the noise structure
which has covariance $C(k_0;\epsilon)$ depending on $k_0(x)$, the parameter to be estimated.

\subsection{Numerical Results}
In this section we demonstrate the results of a numerical computation that show some advantage to using the homogenization theory as we have just described. Given noisy observations of $p_\epsilon(x_j)$ we may compute the MAP estimator $\hat k_1$ using (\ref{mapP}) with covariance $C(k_0;\epsilon)$ given by (\ref{covobshom}):
\begin{equation}\label{k1estdef}
\hat k_{1} = \text{argmax}_{k_0} \; \frac{1}{\sqrt{2\pi|C(k_0;\epsilon)|}}\exp \Bigl(-\frac{1}{2} \bigl(y - \cG(k_0)\bigr)^T C(k_0;\epsilon)^{-1} \bigl(y- \cG(k_0)\bigr)\Bigr)\mu_0(\log k_0), 
\end{equation}
On the other hand, we might ignore the effect of the random microstructure and simply use $C = \gamma^2 I$, accounting only for observation noise:
\begin{equation}\label{k2estdef}
\hat k_{2} = \text{argmax}_{k_0} \; \frac{1}{\sqrt{2\pi|\gamma^2 I|}}\exp \Bigl(-\frac{1}{2} \gamma^{-2} | y - \cG(k_0)|^2  \Bigr)\mu_0(\log k_0).
\end{equation}
Both estimates $\hat k_1$ and $\hat k_2$ are random variables, depending on the random data observed, but we should hope that $\hat k_1$ gives us a better approximation of $k_0$, since it makes use of the true covariance (\ref{covobshom}).  Indeed for simple linear statistical models, it is easy to see that an efficient estimator, which realizes the theoretically optimal variance given by the Cram\'er-Rao lower bound, may be obtained by using the true covariance of the data; however, using the incorrect covariance may lead to an estimate with significantly higher variance than the theoretical optimum. See \cite{NP09} for more discussion of this point. The present setting is highly nonlinear and the variance of the estimates $\hat k_1$ and $\hat k_2$ cannot be computed explicitly, since $C(k_0,\epsilon)$ depends on $k_0$ in a nonlinear way through solution of the PDE. Nevertheless the numerical results are consistent with the expectation that approximation of the true covariance (through homogenization theory) yields a MAP estimator that has smaller variance, relative to the estimate that makes no use of the homogenization theory (see Figure \ref{fig:varplot}).

\begin{figure}[htbp] 
\centering
\includegraphics[width=350pt]{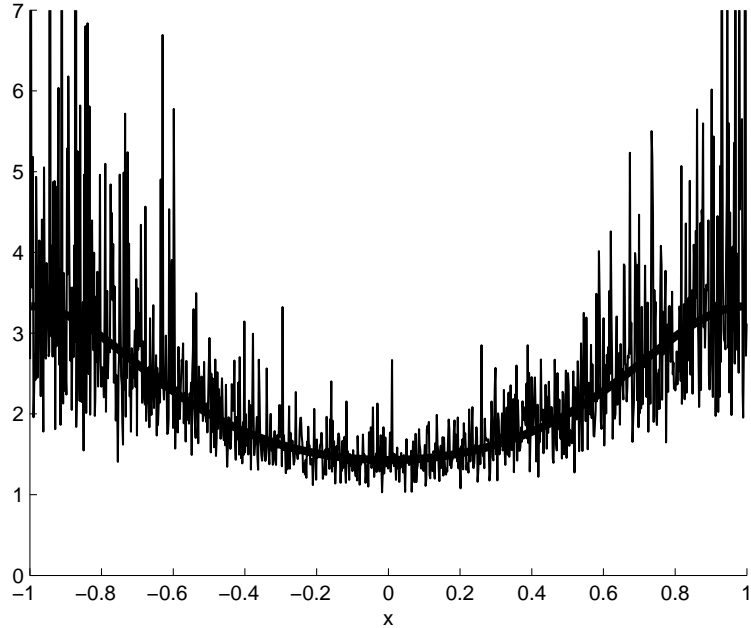}
\caption{The thin erratic curve is one realization of the true coefficient $k^\epsilon(x,\omega)$. The thick curve is the slowly-varying harmonic mean $k_0(x)$. This realization was used to generate the data.} \label{fig:ktrue}
\end{figure}

In Figure \ref{fig:ktrue} we show one realization of the true coefficient $k(x,\omega)$ which was used to generate the data. The highly-oscillatory graph represents the true coefficient $k(x,\omega)$ with variation on many scales. The slowly-varying harmonic mean $k_0(x)$ also is displayed here as the thick curve; this function $k_0$ is what we attempt to estimate. The data was generated as follows.  Using one realization of $k(x,\omega)$ and given forcing $f$, we solve the Dirichlet boundary value problem (\ref{e:pde_nol_pap2}). The observation data involves point-wise evaluation of $p^\epsilon(x_j)$ at points $\{ x_j \}_{j=1}^N$ spaced uniformly across the domain, plus independent observation noise $N(0,\gamma^2)$ at each point of observation. Using this data, we compute estimates $\hat k_1$ and $\hat k_2$ by minimizing (\ref{k1estdef}) and (\ref{k2estdef}), respectively. For the computation shown here, the function $k_0(x)$ is parameterized by the first three coefficients in a Fourier series expansion. So, computing $\hat k_1$ and $\hat k_2$ involves an optimization in $\mathbb{R}^3$. To evaluate $\bbP(k_0 | y)$ at each step in the minimization algorithm, we must solve the forward problem (\ref{e:homogr}) with the current estimate of $k_0$, and in the case of $\hat k_1$ we must also compute $C(k_0,\epsilon)$.  See \cite{NP09} for more details about this computation. 

Figure \ref{fig:kplot} compares the estimate $\hat k_1(x)$ with the true function $k_0(x)$. Since the estimate $\hat k_1(x)$ is a random function, we performed the experiment many times (generating new $k(x,\omega)$ to compute each estimate $\hat k_1$) and display the results of 100 experiments. The data for $\hat k_2$ is qualitatively similar. Nevertheless, the pointwise variance $Var[\hat k_1(x)]$ is smaller than $Var[\hat k_2(x)]$, as shown in Figure \ref{fig:varplot}. This is consistent with the linear estimation theory for which knowledge of the true data covariance yields an estimate with optimal variance.

\begin{figure}[htbp] 
\centering
\includegraphics[width=350pt]{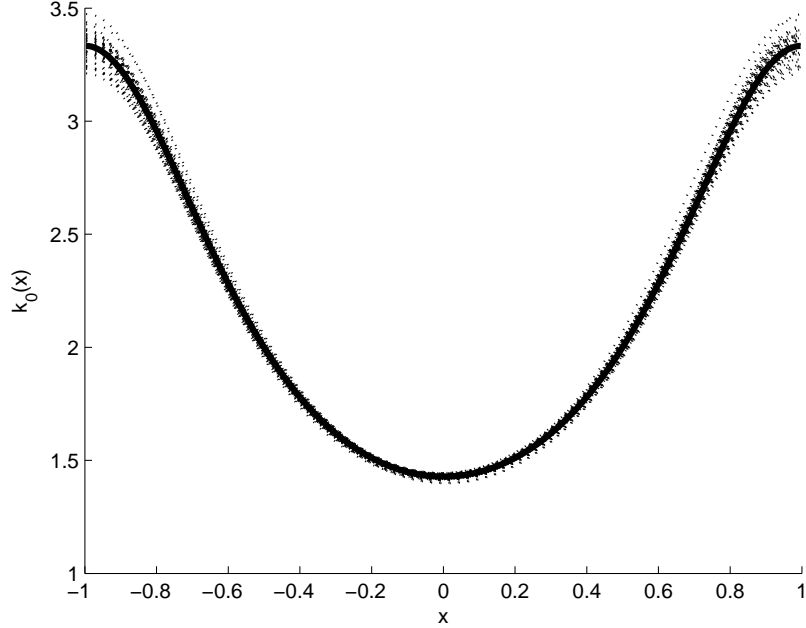}
\caption{The thick curve is the true $k_0$. The dashed series represent 100 independent realizations of the estimate $\hat k_1$.} \label{fig:kplot}
\end{figure}

\begin{figure}[htbp] 
\centering
\includegraphics[width=350pt]{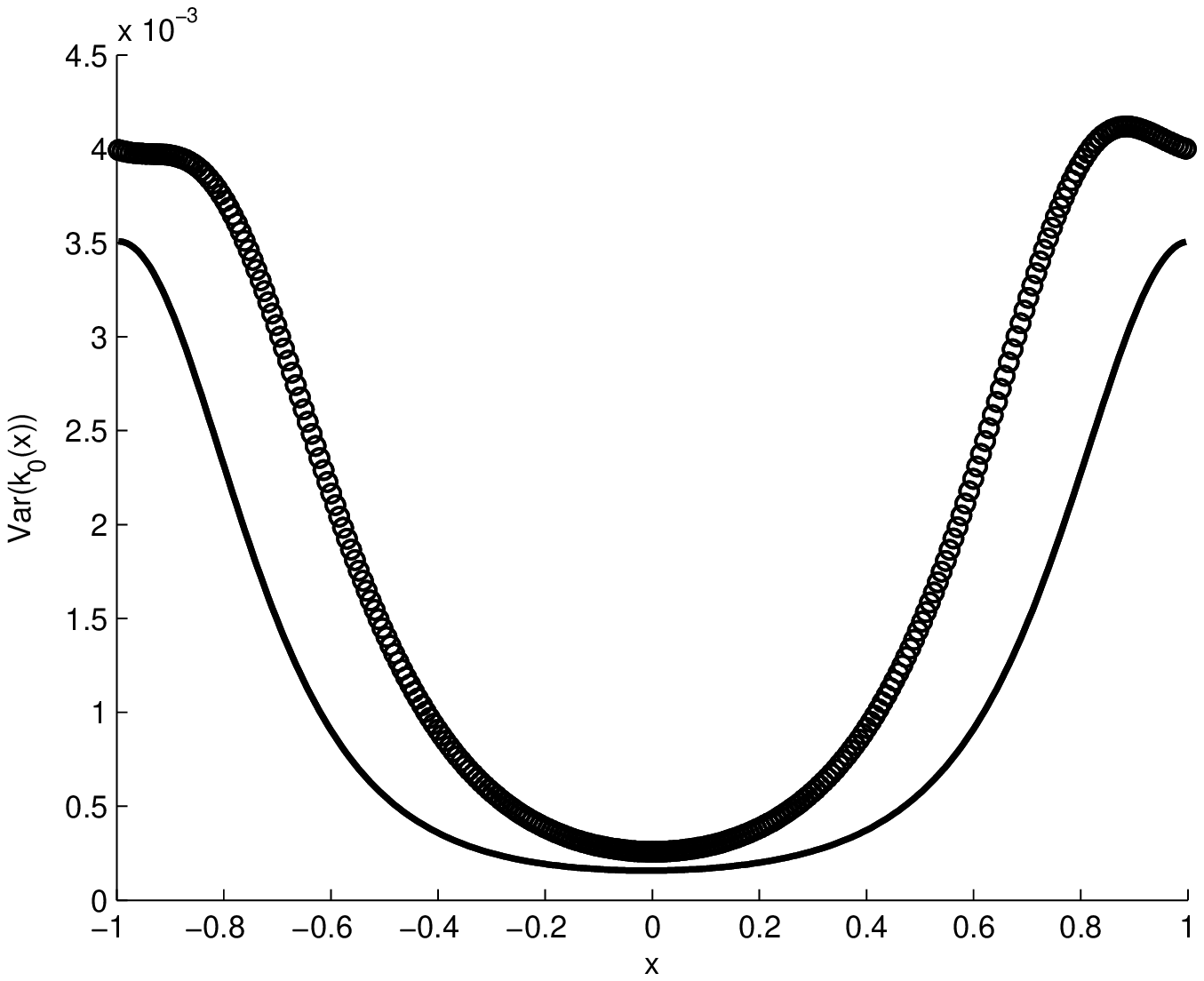}
\caption{The upper series (o) is the empirical variance $Var[ \hat k_2(x)]$. The lower series (-) is $Var[\hat k_1(x)]$. Both quantities were computed using 500 samples.} \label{fig:varplot}
\end{figure}

\label{ssec:num}
\begin{acknowledgement}
The authors thank A. Cliffe and Ch. Schwab
for helpful discussions concerning the groundwater flow model.
\end{acknowledgement}
%
%
%%%%%%%%%%%%%%%%%%%%%%%%%%%%%%%%%%%%%%%%%%%%%%%%%%%%%%%%%%%%%%%%%%%%%%%%%%%%%%%%%%%%%%%%%%
%

%%%%%%%%%%%%%%%%%%%%%%%%%%%%%%%%%%%
%%%%%%%%%%%%%%%%%%%%%%%%%%%%%%%%%%%
%%%%%%%%%%%%%%%%%%%%%%%%%%%%%%%%%%%
\section*{Appendix 1}
\addcontentsline{toc}{section}{Appendix}
\label{app:a}

%\section{Proof of Theorem~\ref{t:avcon}}
In this Appendix we prove Theorem~\ref{t:avcon} which, recall,
applies in the case 
where (\ref{eq:pde22}b) and (\ref{eq:pde2}b) are replaced
by periodic conditions on $D=(L\bbT)^d$.

\begin{theorem} 
Let $\xe (t)$ and $\xo (t)$ be the solutions to 
equations~\eqref{eq:sde22} and~\eqref{eq:ode22}, with
velocity fields extended from $D=(L\bbT)^d$ to $\bbR^d$
by periodicity, and assume that 
Assumption~\ref{ass:1} holds. 
Assume also that $f \in C^{\infty}(D)$ and that $K(x,y) \in C^{\infty}(D ; C^{\infty}_{\rm{per}}(\T^d))$. Then
$$\lim_{\epsilon \to 0} \bbE \sup_{0 \le t \le T}
\|\xe(t)-\xo(t)\|=0.$$
\end{theorem}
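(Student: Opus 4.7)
The strategy is a Gronwall-type estimate for $|\xe - \xo|$, reducing matters to a bound on a time integral of a rapidly oscillating, zero-mean (in the fast variable) function, which is then controlled by a cell-Poisson corrector argument exploiting the divergence-free structure of $Q$ in~\eqref{eq:Qprop}.

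First, I would subtract the integral forms of \eqref{eq:sde22} and \eqref{eq:ode22} and split the drift mismatch:
$$\xe(t)-\xo(t) = \tfrac{1}{\phi}\!\int_0^t\!\!\bigl[\vo(\xe(s))-\vo(\xo(s))\bigr]ds + \tfrac{1}{\phi}\!\int_0^t\!\!\bigl[\ve(\xe(s))-\vo(\xe(s))\bigr]ds + \sqrt{2\eta_0\epsilon}\,W(t).$$
Smoothness of $K$ and $f$ makes $K_0, p_0$ and hence $\vo$ Lipschitz on $D$. The Brownian term satisfies $\bbE\sup_{[0,T]}\sqrt{\epsilon}|W(t)|=O(\sqrt{\epsilon})$. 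Thus by Gronwall's inequality the theorem reduces to
$$\bbE\sup_{0\le t\le T}\Bigl|\int_0^t\bigl[\ve(\xe(s))-\vo(\xe(s))\bigr]\,ds\Bigr|\longrightarrow 0.$$

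Second, by Assumption~\ref{ass:1} together with the identity $\nabla\pea=(I+\nabla_y\chi(\cdot,\cdot/\epsilon)^T)\nabla p_0+O(\epsilon)$ in $L^\infty$, I get $\|\ve-\tilde v^\epsilon\|_{L^\infty}\to 0$ where $\tilde v^\epsilon(x):=-Q(x,x/\epsilon)\nabla p_0(x)$, and using $K_0(x)=\int_{\bbT^d}Q(x,y)\,dy$ the problem further reduces to showing $\bbE\sup_{[0,T]}|\int_0^t b(\xe(s),\xe(s)/\epsilon)\,ds|\to 0$, where
$$b(x,y):=-\bigl[Q(x,y)-K_0(x)\bigr]\nabla p_0(x)$$
is smooth, periodic and zero-mean in $y$.

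Third, I introduce a corrector. For each $x$, let $\Psi(x,y)$ be the unique smooth, periodic, zero-mean-in-$y$ solution of $-\Delta_y\Psi(x,y)=b(x,y)$; $\Psi$ is bounded in $C^2$ uniformly in $x\in D$ by elliptic regularity. Applying ItĂ´'s formula to the corrector $\epsilon\Psi(\xe(s),\xe(s)/\epsilon)$ and using the chain-rule identity
$$\Delta\bigl[\epsilon\Psi(x,x/\epsilon)\bigr] = \epsilon^{-1}\Delta_y\Psi(x,x/\epsilon)+2\sum_j\partial_{x_j y_j}\Psi(x,x/\epsilon)+\epsilon\,\Delta_x\Psi(x,x/\epsilon),$$
one obtains, after rearrangement,
$$\eta_0\!\int_0^t\! b(\xe,\xe/\epsilon)\,ds = \bigl[\epsilon\Psi(\xt,\xt/\epsilon)-\epsilon\Psi(\xe(t),\xe(t)/\epsilon)\bigr]+\!\int_0^t\!\!\bigl[\epsilon\nabla_x\Psi+\nabla_y\Psi\bigr]\!\cdot d\xe + O(\epsilon T).$$
The boundary term is $O(\epsilon)$; the stochastic-integral part coming from $\sqrt{2\eta_0\epsilon}\,dW$ is $O(\sqrt{\epsilon})$ in $\bbE\sup$ by Burkholder--Davis--Gundy; the $\epsilon\nabla_x\Psi$ drift contribution is $O(\epsilon T)$. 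What remains is $\phi^{-1}\int_0^t\nabla_y\Psi(\xe,\xe/\epsilon)\cdot\ve(\xe)\,ds$, a single order-one oscillatory integral.

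Fourth, this residual integrand has zero mean in $y$. Replacing $\ve$ by $\tilde v^\epsilon$ modulo the $o(1)$ error from Assumption~\ref{ass:1}, the $y$-average of $\nabla_y\Psi(x,y)\cdot\tilde v^\epsilon(x,y)$ equals, after integration by parts on $\bbT^d$,
$$\sum_{j,k}\!\int_{\bbT^d}\!\Psi(x,y)\,\partial_{y_j}Q_{jk}(x,y)\,dy\,\partial_k p_0(x)\cdot(\text{constants}) = 0,$$
where the vanishing follows from $\sum_j\partial_{y_j}Q_{jk}=0$, which is equivalent to~\eqref{eq:Qprop}. Thus the new integrand is itself a zero-$y$-mean periodic function and one is in a position to reapply the corrector construction; combining this iteration with the small noise $\sqrt{\epsilon}\,dW$ (which regularises the fast process $\xe(\cdot)/\epsilon$ and provides ergodicity on the torus on the $\epsilon$ timescale) closes the bound and gives the required convergence.

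\textbf{Main obstacle.} The delicate step is closing the corrector iteration in Step~4: because the noise coefficient in~\eqref{eq:sde22} is of order $\sqrt{\epsilon}$ rather than $O(1)$, the usual homogenization-of-diffusions machinery (where the ItĂ´ correction alone produces the cancellation) does not apply directly. One must combine the corrector argument with an ergodic/averaging estimate for $\xe(\cdot)/\epsilon$ on $\bbT^d$, using the divergence-free identity~\eqref{eq:Qprop} in an essential way, to verify that the bounded but oscillating integrand $\nabla_y\Psi\cdot\ve$ averages to zero over trajectories uniformly in $t\in[0,T]$.
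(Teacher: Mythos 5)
Your Steps 1 and 2 reproduce the paper's reduction exactly: the paper also writes $\ve(x) = -Q(x,x/\eps)\nabla p_0(x) + \de(x) + \eps\dd(x)$ with $\|\de\|_{L^\infty}\to 0$ by Assumption~\ref{ass:1}, reduces the problem to averaging the zero-mean oscillatory drift $V(x,y)-\vo(x)$, $V(x,y)=-Q(x,y)\nabla p_0(x)$, and closes with Gronwall. The gap is in Step 3. You pose the corrector equation $-\Delta_y\Psi=b$, inverting only the Laplacian part of the fast dynamics; this is the wrong cell operator, and it is exactly why you are left with the order-one residual $\phi^{-1}\int_0^t\nabla_y\Psi\cdot\ve\,ds$. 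The iteration you propose in Step 4 cannot close: each pass replaces one $O(1)$ zero-mean oscillatory integral by another of the same size, with no gain of a power of $\eps$ per step. In effect you are trying to build the inverse of the full fast generator as a Neumann series around $\Delta_y^{-1}$, whose convergence would require the drift to be small relative to $\eta_0$ --- which is not assumed. The ``ergodic/averaging estimate'' invoked in your closing paragraph is therefore carrying the entire burden of the proof and is never supplied.

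The fix, which is the paper's actual argument, is to observe that the fast variable $y(t)=\xe(t)/\eps$ has generator $\eps^{-1}\cL_0$ plus lower-order terms, with $\cL_0 = V(x,y)\cdot\nabla_y+\eta_0\Delta_y$: the $O(\sqrt{\eps})$ noise in $x$ is an $O(\sqrt{1/\eps})$ noise in $y$, so drift and diffusion \emph{both} enter the fast generator at order $\eps^{-1}$ (this is precisely why the scaling $\eta=\eps\eta_0$ was chosen). One then solves the Poisson equation $-\cL_0\Phi = V(x,y)-\vo(x)$ on $\bbT^d$. The Fredholm solvability condition is that the right-hand side integrate to zero against the invariant measure of $\cL_0$; that measure is Lebesgue measure precisely because of the divergence-free identity~\eqref{eq:Qprop}, and since $\vo(x)=\int_{\bbT^d}V(x,y)\,dy$ the condition holds. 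Applying It\^o's formula to $\Phi(x,x/\eps)$ then cancels the oscillatory drift entirely at leading order --- the troublesome $\eps^{-1}V\cdot\nabla_y\Phi$ term is absorbed into $\eps^{-1}\cL_0\Phi$ rather than left as a residual --- leaving $O(\eps)$ boundary and drift terms, an $O(\sqrt{\eps})$ martingale controlled by the Burkholder--Davis--Gundy inequality, and the $\de$ error controlled by Assumption~\ref{ass:1}. Your instinct that~\eqref{eq:Qprop} is essential is correct, but its role is to certify Lebesgue measure as the invariant measure of the \emph{full} fast generator, not to make a $\Delta_y$-based iteration converge.
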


\begin{proof}
To simplify the notation we will set the porosity of the rock to be equal to $1$, $\phi =1$. Recall that $\ve(x)=K^\eps (x) \nabla p^\eps (x).$ Our first observation is that, for
$\pea(x)$ given by \eqref{eq:pap},
\be\label{e:veps}
K^\eps (x) \nabla p^\eps (x) = K^\eps (x) \nabla \pea (x) - \delta^{\eps}(x)
\ene
where
\be\label{e:delta}
\delta^\eps (x) = -K^{\eps}(x)\nabla \Big( p^\eps (x) - \pea(x) \Big) .
\ene
From Assumption~\ref{ass:1} we deduce that
\be\nonumber
\lim_{\eps \rightarrow 0} \|\delta^\eps (x) \|_{L^{\infty}} = 0.
\ene
From the definition of $\pea(x)$ it follows that
\be\nonumber
K^{\eps}(x) \nabla \pea (x) = Q^{\eps}(x) \nabla p_0(x) - \eps \delta^{\eps}_1(x)
\ene
where 
\be\label{e:delta1}
\delta_1^\eps (x) =  -K^\eps (x) \nabla_x p_1(x,x/\eps),
\quad Q^{\eps}(x)=Q(x,x/\eps). 
\ene
From the definition of $p_1$ in \eqref{eq:p1} we see that
\be\nonumber
\|\delta_1^{\eps}(x) \|_{L^{\infty}} \leq C.
\ene
Putting \eqref{e:veps} and~\eqref{e:delta1} together
we see that
$$\ve(x)=  -Q^\eps (x) \nabla p_0(x) + \delta^\eps (x) + \eps \delta_1^\eps (x)
$$
and we see from \eqref{e:delta} and \eqref{e:delta1} that
the perturbations of $\ve(x)$ from $Q^\eps (x) \nabla p_0(x)$
are small; it is thus natural to expect a limit theorem
for $\xe$ solving \eqref{eq:sde22} which is Lagrangian
transport in an appropriately averaged version of  
$Q^\eps (x) \nabla p_0(x).$ Furthermore,
since $Q(x,y)$ is divergence free in the fast $y$
coordinate, by \eqref{eq:Qprop},
it is natural to expect that the
appropriate average is Lebesgue measure.
We now demonstrate that this
is indeed the case.

From~\eqref{eq:sde22} we deduce that
\be
x^\eps(t) = x(0) + \int_0^t \left(- Q^\eps (x) \nabla p_0(x(s)) + \delta^\eps (x(s)) + \eps \delta_1^\eps (x(s)) \right) \, ds + \sqrt{2 \eta_0 \eps} \, W(t).
\label{eq:a7}
\ene
Define now $V(x,y) =- Q(x,y) \nabla \po (x)$ and consider the system of SDEs
\begin{subequations}\label{e:system_sde}
\be
\frac{dx}{d t} = \left( V(x,y) + \de(x) + \eps \dd(x) \right)+\sqrt{2 \eta_0 \eps} \, \frac{d W}{d t},
\ene
\be
\frac{dy}{d t} =  \frac{1}{\eps} \bigl(V(x,y) + \de(x)\bigr) + \dd(x) + \sqrt{\frac{2 \eta_0}{ \eps}} \, \frac{d W}{d t}.
\ene
\end{subequations}
Since $y = x/\eps$ we see that $x(t)$, the solution of~\eqref{e:system_sde} is equal to $\xe(t)$ appearing
in \eqref{eq:a7}. 

The process $\{ x(t), \, y(t) \}$ is Markov with generator
\begin{eqnarray*}
\cL & = & \frac{1}{\eps} \Bigl(\bigl(V(x,y)+\de(x)\bigr) \cdot \nabla_y + \eta_0 \Delta_y \Bigr)\\
&& + \Bigl(\bigl(V(x,y)+\de(x)\bigr) \cdot \nabla_x 
+\dd(x)\cdot \nabla_y
+ \eta_0\nabla_x \cdot \nabla_y 
+ \eta_0\nabla_y \cdot \nabla_x 
\Bigr)\\
&&\quad\quad\quad\quad + \eps \eta_0 \Delta_x+\eps\dd(x)\cdot\nabla_x 
    \\ & =: & \frac{1}{\eps} \bigl(\cL_0+\de(x)\cdot\nabla_y\bigr) + \cL_1 + \eps \cL_2.
\end{eqnarray*}
Consider now the Poisson equation
\be\label{e:poisson_proof}
- \cL_0 \Phi = V(x,y) - \vo (x)
\ene
with (see~\eqref{eq:pde2}(c))
\be\nonumber
\vo (x) = \int_{\T^d} V(x,y) \, dy.
\ene
Equation~\eqref{e:poisson_proof} is posed on $\T^d$ with periodic boundary conditions. Notice that $x$ enters merely as a parameter in this equation. The operator $\cL_0$ is uniformly elliptic on $\T^d$ and the right hand side averages to $0$, hence, by Fredholm's alternative this equation has a solution which is unique, up to constants. We fix this constant by requiring that $\int_{\T^d} \Phi(x,y) \, dy =0$. We define $\Phi^{\eps}(x):=\Phi(x, x/\eps)$ and similarly for ${\cal L}_i\Phi^{\eps}(x)$.
Applying It\^{o}'s formula to $\Phi$ and evaluating
at $y=x/\eps$ we obtain 
\begin{eqnarray*}
d \Phi^{\eps}(x) & = & \frac{1}{\eps} \left(\cL_0 \Phi^\eps
+\de(x)\cdot\nabla_y\Phi \left( x,x/\eps \right)\right) \, dt + \cL_1 \Phi^\eps \, dt + \eps \cL_2 \Phi^\eps \, dt\\
&&\quad\quad\quad\quad +\sqrt{\frac{2\eta_0}{\eps}} \nabla_y \Phi^\eps d W + \sqrt{2 \eta_0 \eps} \nabla_x \Phi^\eps dW
           \\    & = & - \frac{1}{\eps} \left( V \left(x, x/\eps \right) - \vo(x) +\de(x)\cdot\nabla_y\Phi(x,x/\eps) \right) \, dt + \cL_1 \Phi^\eps   + \eps \cL_2 \Phi^\eps \, dt\\
&&\quad\quad\quad\quad\quad\quad\quad\quad +\sqrt{\frac{2 \eta_0}{\eps}} \nabla_y \Phi^\eps d W + \sqrt{2 \eta_0 \eps} \nabla_x \Phi^\eps dW.
\end{eqnarray*}
Consequently,
\begin{align*}
\int_0^t &V\bigl(x(s),y(s) \bigr) \, ds  - \int_0^t \vo(x(s)) \, ds\\
&=\int_0^t \Big( \de(x(s))\cdot\nabla_y\Phi(x(s),x(s)/\eps)+\eps \cL_1 \Phi^\eps (x(s))  + \eps^2 \cL_2 \Phi^\eps(x(s)) \Big) \, ds 
   \\ &\quad\quad\quad\quad- \eps \Big(\Phi^\eps(x^\eps (t)) - \Phi^\eps(x^\eps (0)) \Big) + \sqrt{\eps} M^{\eps}(t),
\end{align*}
where
%\be
%\cL_1 \Phi^\eps = \Big( V^{\eps}(x) \cdot \nabla_x 
%+ \nabla_x \cdot \nabla_y 
%+ \nabla_y \cdot \nabla_x 
%\Big) \Phi^\eps, \quad \cL_2 \Phi^\eps = \Delta_x \Phi^\eps
%\ene
%and
\be\nonumber
M^\eps(t):= \int_0^t \left(\sqrt{2 \eta_0} \nabla_y \Phi^\eps + \eps \sqrt{2 \eta_0} \nabla_x \Phi^\eps \right) dW.
\ene
Since $\Phi(x,y)$ is periodic in both coordinates
we have that
$$
\|\nabla_y\Phi(x,x/\epsilon)\|_{L^{\infty}} \le C,
\quad \|\Phi^\eps (x) \|_{L^{\infty}} \leq C, \quad \| \cL_1 \Phi^\eps \|_{L^{\infty}} \leq C, \quad \| \cL_1 \Phi^\eps \|_{L^{\infty}} \leq C
$$
and
\begin{equation}\label{e:martingale_estim}
\Ex \|M^\eps (t) \|^p \leq C, \quad p \geq 1.
\end{equation}
We combine the above calculations to obtain
\be\nonumber
x^\eps(t) = x(0) + \int_0^t \vo(x^{\eps}(s)) \, ds + H^\eps(t) + \sqrt{\eps} \tilde{M}^{\eps}(t),
\ene
where 
\begin{eqnarray*}
H^{\eps}(t) &:=& -\eps \Big(\Phi^\eps(x^\eps (t)) - \Phi^\eps(x^\eps (0)) \Big) + \int_0^t \left(\delta^{\eps}(x^{\eps}(s)) + \eps \delta_1^{\eps}(x^{\eps}(s)) \right)\, ds
          \\ && + \int_0^t \Big(
\de(x(s))\cdot\nabla_y\Phi(x(s),x(s)/\eps)+
\eps \cL_1 \Phi^\eps (x(s))  + \eps^2 \cL_2 \Phi^\eps(x(s)) \Big) \, ds 
\end{eqnarray*}
and
\be\nonumber
\tilde{M}^{\eps}(t) = M^{\eps}(t) + \sqrt{2\eta_0}W(t).
\ene
Our estimates imply that
\be\nonumber
\lim_{\eps \rightarrow 0} \Ex \sup_{t \in [0,T]} |H^{\eps}(t)| = 0.
\ene
Furthermore, estimate~\eqref{e:martingale_estim}, together with the Burkh\"{o}lder-Davis-Gundy inequality imply that
\be\nonumber
\Ex \sup_{t \in [0,T]} |\tilde{M}^{\eps}(t)| \leq C.
\ene
On the other hand,
\be\nonumber
x(t) = x(0) + \int_0^t \vo(x(s)) \, ds. 
\ene
Set $\theta(T):= \Ex \sup_{t \in [0,T]} | x^{\eps}(t) - x(t) |$. 
Because $v_0$ is periodic it is
in fact globally Lipschitz so that we obtain 
\be\nonumber
\theta(T) \leq C \int_0^T \theta(t) \, dt + h^\eps(T),
\ene
where 
$$
\lim_{\eps \rightarrow 0} h^\eps(T) = 0.
$$
We use Gronwall's inequality to deduce
\be\nonumber
\theta(T) \leq h^\eps \left(1 + C T e^{C T} \right),
\ene
from which the claim follows.
\qed
\end{proof}
%
%%%%%%%%%%%%%%%%%%%%%%%%%%%%%%%%%%%%%%%%%%%%%%%%%%%%%%%%%%%%%%%%%%%%%%%%%%%%%%
%

%%%%%%%%%%%%%%%%%%%%%%%%%%%%%%%%%%%
%%%%%%%%%%%%%%%%%%%%%%%%%%%%%%%%%%%
%%%%%%%%%%%%%%%%%%%%%%%%%%%%%%%%%%%
\section*{Appendix 2}
\addcontentsline{toc}{section}{Appendix}
\label{app:b}
%\section{The One Dimensional Problem}

In this appendix we study the homogenization problem~\eqref{eq:pde22} in one dimension. In this case we can calculate the homogenized coefficient explicitly and to prove Assumption~\ref{ass:1}. More details can be found in~\cite[Ch. 12]{PS08}.

\subsection*{The Homogenized Equations}

We take $d=1$ in~\eqref{eq:pde22} and set $D = [0,L].$  Then the Dirichlet problem
\eqref{eq:pde22} reduces to a two--point boundary value problem:
\begin{subequations}
\begin{eqnarray}
- \frac{d}{d x} \left( \exp \left( u \left(x,\frac{x}{\eps} \right) \right) \frac{d p^\eps}{dx} \right) & = & f
\quad {\mbox{for}}\,x \in (0,L), \label{e:dir_1_d_e}
\\ p^\eps(0) = p^\eps(L) & = & 0.
\label{e:dir_1d_bc}
\end{eqnarray}
\label{e:dir_1d}
\end{subequations}
We assume that $u(x,y)$ is smooth in both of its arguments and periodic in $y$ with period 1. Furthermore, we assume that this function is bounded from above and below. Consequently, there exist constants $0 < \alpha \leq \beta<\infty$ such that
\begin{equation}
\label{eq:abd}
\alpha \le \exp(u(x,y)) \le \beta, \quad \forall y \in [0,1].
\end{equation}
We also assume that $f$ is smooth.

The cell problem becomes a boundary value problem for an ordinary differential
equation with periodic boundary conditions. Introducing the notation $k(x,y):=\exp(u(x,y))$, the cell problem can be written as
\begin{subequations}
\begin{equation}
- \frac{\partial}{\partial y} \left( k (x,y ) \frac{\partial \chi}{\partial y} \right) =  \frac{\partial k(x,y)}{\partial y},
\quad {\mbox{for}}\; y \, \in (0,1),
\label{e:cell_1d_e}
\end{equation}
\begin{equation}
\chi \, \, \mbox{is } 1 \mbox{--periodic},
\quad \quad \int_0^1 \chi (x,y) \, dy = 0.
\label{e:cell_1d_bc}
\end{equation}
\label{e:cell_1d}
\end{subequations}
Notice that the macrovariable $x$ enters the cell problem~\eqref{e:cell_1d} as a parameter.
Since $d = 1$ we only have one effective coefficient which is given by the
one dimensional version of \eqref{eq:K},\eqref{eq:Q}, namely
\begin{eqnarray}
k_0(x) & = & \int_0^1 \left( k(x,y) + k(x,y) \frac{\partial \chi}{\partial y}(x,y)
\right) \, dy
            \nonumber \\ & = & \left\langle k(x,y) \left(1 + \frac{\partial \chi}{\partial y}(x,y) \right)
        \right\rangle
\label{e:eff_coeff_1d}
\end{eqnarray}
where we have introduced the notation $\langle \phi(x,y) \rangle := \int_0^1 \phi(x,y) \, dy$. The homogenized equation is then
\begin{subequations}
\label{eq:onedh}
\begin{eqnarray}
- \frac{d}{d x} \left( k_0(x) \frac{d p_0}{d x} \right) &=& f, \quad x \in (0,L), \\
p(0) = p(L) &=& 0.
\end{eqnarray}
\end{subequations}

\subsection*{Explicit Solution of the Cell Problem}

Equation \eqref{e:cell_1d_e} can be solved exactly. After integrating the equation and applying the periodic boundary conditions, we obtain
\[
 \chi(x,y) = -y + c_1 \int_0^y \frac{1}{k(x,y)} \, dy + c_2,
\]
with
\[
c_1(x) =  \frac{1}{\int_0^1 \frac{1}{k(x,y)} \, dy} = \langle k(x,y)^{-1}  \rangle^{-1}.
\]
Therefore, from \eqref{e:eff_coeff_1d} we obtain:
\begin{equation}
k_0(x) = \langle k(x,y)^{-1}  \rangle^{-1}.
\label{e:eff_1d}
\end{equation}
The constant $c_2$ is irrelevant. This is the formula which gives the homogenized coefficient in one dimension.
It shows clearly that, even in this simple one--dimensional setting, the
homogenized coefficient is not found by simply averaging the
unhomogenized coefficients over a period of the microstructure.
Rather, the homogenized coefficient is the {\it harmonic average}
of the unhomogenized coefficient. It is quite easy to show that $k_0(x)$ is
bounded from above by the average of $k(x,y)$. Notice that the homogenized coefficient can be written in the form
\be\label{e:coeff_homog}
k_0(x) = e^{u_0(x)}, \quad \mbox{where} \quad u_0(x) = \log \left(\langle \exp (- u(x,y)) \rangle^{-1} \right).
\ene

\subsection*{Error Estimates in $W^{1,\infty}$}

The fact that we can obtain an explicit formula for the solution of the boundary value problem~\eqref{e:dir_1d} as well as for the solution of the cell problem~\eqref{e:cell_1d} enables us to prove Assumption~\ref{ass:1}. 

\begin{prop}\label{prop:conv_1d}
Let $p^\eps(x)$ be the solution of the two-point boundary value problem~\eqref{e:dir_1d} where the log permeability $u(x,y)$ is smooth in both of its arguments and satisfies~\eqref{eq:abd}. Let $k(x,y) = \exp(u(x,y))$ and define
\be\nonumber
\ve (x) = k \left(x,\frac{x}{\eps} \right) \frac{d p^{\eps}}{d x}(x)
\ene
and 
\be\nonumber
V(x,y) = k \left(x,y \right) \left(1 + 
\frac{\partial \chi}{\partial y} \left(x,y\right)\right) \frac{d p_0}{d x}(x),
\ene
where $p_0(x)$ is the solution of the homogenized equation
\eqref{eq:onedh}.
Then
\be\label{e:linfty_estim}
\lim_{\eps \to 0}\|\ve(x) - V(x,x/\epsilon) \|_{L^{\infty}} =0.
\ene
\end{prop}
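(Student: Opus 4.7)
The plan is to exploit the fact that in one dimension the flux $v^{\epsilon}$ can be computed almost by hand. Since $-\frac{d}{dx}v^{\epsilon}=f$, direct integration gives $v^{\epsilon}(x) = v^{\epsilon}(0) - F(x)$ with $F(x)=\int_{0}^{x}f(s)\,ds$. Dividing by $k(x,x/\epsilon)$ and using the Dirichlet conditions $p^{\epsilon}(0)=p^{\epsilon}(L)=0$ pins down the constant:
\begin{equation*}
v^{\epsilon}(0) = \frac{\int_{0}^{L} F(s)/k(s,s/\epsilon)\,ds}{\int_{0}^{L} 1/k(s,s/\epsilon)\,ds}.
\end{equation*}
Exactly the same manipulation applied to the homogenized equation \eqref{eq:onedh} yields $v_0(x)=v_0(0)-F(x)$ with
\begin{equation*}
v_0(0) = \frac{\int_{0}^{L} F(s)/k_0(s)\,ds}{\int_{0}^{L} 1/k_0(s)\,ds}.
\end{equation*}

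Next, I would use the explicit formula for the cell function derived above in this Appendix. Differentiating $\chi(x,y)=-y+k_0(x)\int_0^y k(x,y')^{-1}dy' + c_2$ gives $\partial_y\chi(x,y) = -1 + k_0(x)/k(x,y)$, so that
\begin{equation*}
V(x,y) = k(x,y)\Bigl(1+\partial_y\chi(x,y)\Bigr)\frac{dp_0}{dx}(x) = k_0(x)\frac{dp_0}{dx}(x) = v_0(x),
\end{equation*}
which is \emph{independent of $y$}. This is the crucial one-dimensional miracle: the corrected flux has no fast oscillation. Consequently $v^{\epsilon}(x) - V(x,x/\epsilon) = v^{\epsilon}(0) - v_0(0)$ is a constant in $x$, and the $L^{\infty}$ norm in \eqref{e:linfty_estim} reduces to $|v^{\epsilon}(0) - v_0(0)|$.

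It therefore remains to show $v^{\epsilon}(0) \to v_0(0)$ as $\epsilon\to 0$. By the explicit formulas above this amounts to the standard oscillating-test-function convergence
\begin{equation*}
\int_0^L g(s)\, \frac{1}{k(s,s/\epsilon)}\,ds \;\longrightarrow\; \int_0^L g(s)\, \Bigl\langle\frac{1}{k(s,\cdot)}\Bigr\rangle ds = \int_0^L g(s)\,\frac{1}{k_0(s)}\,ds
\end{equation*}
for $g\in C([0,L])$, applied with $g=1$ and $g=F$. This classical periodic averaging lemma is proved by a Riemann-sum argument: partition $[0,L]$ into intervals of length $O(\sqrt{\epsilon})$ on which the smooth function $(s,g(s))\mapsto g(s)/k(s,y)$ is nearly constant in $s$, and on each interval apply the elementary fact that $\int\phi(s/\epsilon)\,ds$ averages to $L_{\text{int}}\langle\phi\rangle$. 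The bound \eqref{eq:abd} guarantees $1/\beta \le 1/k \le 1/\alpha$, so the denominators stay uniformly bounded away from zero, and the ratio defining $v^\epsilon(0)$ converges to that defining $v_0(0)$.

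I do not expect a genuine obstacle in this one-dimensional setting; the only item requiring care is the uniform-in-$x$ assertion, and as noted above this is free because $v^\epsilon - V(\cdot,\cdot/\epsilon)$ is literally constant in $x$. The analogous result in higher dimensions is much harder because $V(x,y)$ genuinely depends on $y$ there, and one must instead rely on gradient estimates of Avellaneda--Lin type cited after Assumption~\ref{ass:1}; that is why the proof given here is confined to the one-dimensional case.
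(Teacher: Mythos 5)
Your proposal is correct and follows essentially the same route as the paper's proof: both exploit the explicit integration of the one-dimensional flux equation to write $v^\eps = -F + c^\eps$ and $V = -F + c$, use the explicit cell solution to see that $V(x,y)=k_0(x)p_0'(x)$ is independent of $y$ so the difference is the constant $c^\eps - c$, and then conclude by a Riemann-sum averaging argument for the oscillating harmonic mean. The only immaterial difference is that the paper partitions into intervals of length $\eps$ and obtains the sharper rate $|c-c^\eps|={\cal O}(\eps)$, while your $O(\sqrt{\eps})$ partition still suffices for the stated limit.
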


Notice that, by \eqref{eq:p1}, the corrector 
$p_1(x,y)=\chi(x,y)\frac{d p_0}{d x}(x).$
Hence, using the bound \eqref{eq:abd} from below on $a$, 
together with the definition \eqref{eq:pap} of $\pea$,
this theorem delivers the following immediate corollary:

\begin{corollary}
Under the assumptions of Proposition~\ref{prop:conv_1d} we have
\be\nonumber
\lim_{\eps \to 0}\|p^\eps - \pea\|_{W^{1,\infty}}=0.
\ene
\end{corollary}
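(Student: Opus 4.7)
My plan is to deduce the Corollary directly from Proposition~\ref{prop:conv_1d} by converting the $L^\infty$ flux convergence into $W^{1,\infty}$ convergence of $p^\eps-\pea$, using the explicit form $\pea(x)=p_0(x)+\eps\,\chi(x,x/\eps)\,p_0'(x)$ coming from \eqref{eq:p1} together with the uniform ellipticity bound $k\ge\alpha>0$ in \eqref{eq:abd}. First I would rewrite the two fields from the Proposition as
\[
(p^\eps)'(x)=\frac{\ve(x)}{k(x,x/\eps)},\qquad (1+\chi_y(x,x/\eps))\,p_0'(x)=\frac{V(x,x/\eps)}{k(x,x/\eps)},
\]
so that dividing $\ve-V(\cdot,\cdot/\eps)$ by $k$ and using \eqref{e:linfty_estim} gives
\[
\bigl\|(p^\eps)'-(1+\chi_y(\cdot,\cdot/\eps))\,p_0'\bigr\|_{L^\infty}\longrightarrow 0.
\]

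Next I would compute $\pea'$ by the chain rule,
\[
\pea'(x)=p_0'(x)+\chi_y(x,x/\eps)\,p_0'(x)+\eps\bigl[\chi_x(x,x/\eps)\,p_0'(x)+\chi(x,x/\eps)\,p_0''(x)\bigr],
\]
and observe that the bracketed term is $O(1)$ uniformly in $x$, because the explicit formula for $\chi$ in the appendix together with smoothness of $u$ in $(x,y)$ and the lower bound on $k$ yield $\chi,\chi_x\in L^\infty$, and because smoothness of $f$ and $k_0$ makes $p_0\in C^\infty([0,L])$. Hence $\pea'=(1+\chi_y(\cdot,\cdot/\eps))\,p_0'+O(\eps)$ in $L^\infty$, and combining with the previous display produces
\[
\bigl\|(p^\eps)'-\pea'\bigr\|_{L^\infty}\longrightarrow 0.
\]

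For the $L^\infty$ part I would exploit the boundary condition: since $p^\eps(0)=0$ and $\pea(0)=\eps\,\chi(0,0)\,p_0'(0)=O(\eps)$, the fundamental theorem of calculus gives
\[
\|p^\eps-\pea\|_{L^\infty}\le |(p^\eps-\pea)(0)|+L\,\|(p^\eps-\pea)'\|_{L^\infty}\longrightarrow 0,
\]
which together with the gradient estimate yields $\|p^\eps-\pea\|_{W^{1,\infty}}\to 0$.

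I do not expect any real obstacle here: the substantive analytic work (uniform convergence of the flux, needing the explicit 1D Green's function type representation of $\chi$ and careful handling of the cell-problem corrector) lives in Proposition~\ref{prop:conv_1d}. Once that flux estimate is in hand, the Corollary is a matter of algebra, the chain rule, and the uniform lower bound on $k$; the only thing to verify carefully is uniform boundedness of $\chi_x$ and $p_0''$, which follows from the smoothness assumptions on $u$ and $f$ combined with the explicit formula derived in the appendix.
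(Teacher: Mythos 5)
Your proposal is correct and follows exactly the route the paper intends: divide the flux estimate \eqref{e:linfty_estim} of Proposition~\ref{prop:conv_1d} by $k$ using the lower bound $\alpha$ from \eqref{eq:abd}, identify $V/k=(1+\chi_y)p_0'$ as the leading part of $\pea'$ via \eqref{eq:p1} and \eqref{eq:pap}, and absorb the remaining $O(\eps)$ corrector terms and the zeroth-order norm via the boundary conditions. The paper simply declares this step ``immediate''; your write-up supplies precisely the omitted details, with no gaps.
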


\noindent {\it Proof of Proposition~\ref{prop:conv_1d}.}
We have that 
\be\nonumber
\frac{d\chi}{dy}(x,y) = -1 +\frac{k_0(x)}{k(x,y)}.
\ene
Consequently
\be\nonumber
V(x,y) = k_0(x) \frac{d p_0}{d x}(x).
\ene
Define a function $F$ by $F'(z)=f(z).$
We solve the homogenized equation to obtain
\be\nonumber
 k_0(x) \frac{dp_0}{dx}(x) = - F(x) + c, 
\ene
with
$$
c = \frac{\int_0^L k^{-1}_0(z) F(z) \, dz}{\int_0^L k^{-1}_0(z) \, dz}.
$$
Similarly, from~\eqref{e:dir_1d} we obtain
\be\nonumber
k \left(x,\frac{x}{\eps} \right) \frac{d p^\eps}{d x} = - F(x) + c^\eps,
\ene
with
$$
c^\eps = \frac{\int_0^L k^{-1}(z,z/\eps) F(z) \, dz}{\int_0^L k^{-1}(z,z/\eps) \, dz}.
$$

From the above calculations we deduce that
\be\nonumber
\|\ve(x) - V(x,x/\epsilon) \|_{L^{\infty}} = |c - c^\eps |.
\ene
It suffices to show that $|c - c^\eps |={\cal O}(\eps).$ This will follow from the fact that
$$\int_0^L k^{-1}(z,z/\eps) G(z) = \int_0^L k_0^{-1}(z)G(z)dz + {\cal O}(\eps)$$
for any smooth function $G$, as $\epsilon \to 0$. To see this, define integer $N$ and $\delta \in [0,\epsilon)$
uniquely by the identity 
\be\label{e:int_part}
L = N\eps + \delta. 
\ene
Then note that, using the uniform bounds on $k(x,y)$
from below, together with uniform (in $y$) Lipschitz
properties of $a(\cdot,y)$ and $G$, we have for $z_n=n\eps$,
\begin{align*}
\int_0^L k^{-1}(z,z/\eps)G(z)dz&=\sum_{n=0}^{N-1}
\int_{n\eps}^{(n+1)\eps}k^{-1}(z_n,z/\eps)G(z_n)dz+{\cal O}(\eps)\\
&=\sum_{n=0}^{N-1}\int_{n\eps}^{(n+1)\eps}k_0^{-1}(z_n)G(z_n)dz+{\cal O}(\eps)\\
&=
\sum_{n=0}^{N-1}\int_{n\eps}^{(n+1)\eps}k_0^{-1}(z)G(z)dz+{\cal O}(\eps)\\
&=\int_0^L k_0^{-1}(z)G(z)dz +{\cal O}(\eps).
\end{align*}
This completes the proof.
\qed

\def\cprime{$'$} \def\cprime{$'$} \def\cprime{$'$} \def\cprime{$'$}
  \def\cprime{$'$} \def\cprime{$'$} \def\cprime{$'$}
  \def\Rom#1{\uppercase\expandafter{\romannumeral #1}}\def\u#1{{\accent"15
  #1}}\def\Rom#1{\uppercase\expandafter{\romannumeral #1}}\def\u#1{{\accent"15
  #1}}\def\cprime{$'$} \def\cprime{$'$} \def\cprime{$'$} \def\cprime{$'$}
  \def\cprime{$'$} \def\cprime{$'$}

%\bibliography{mybib}

\begin{thebibliography}{10}

\bibitem{allaire}
G.~Allaire.
\newblock Homogenization and two-scale convergence.
\newblock {\em SIAM J. Math. Anal.}, 23(6):1482--1518, 1992.

\bibitem{AL87}
M.~Avellaneda and F.-H. Lin.
\newblock Compactness methods in the theory of homogenization.
\newblock {\em Comm. Pure Appl. Math.}, 40(6):803--847, 1987.

\bibitem{BGMP08}
G.~Bal, J.~Garnier, S.~Motsch, and V.~Perrier.
\newblock Random integrals and correctors in homogenization.
\newblock {\em Asymptotic Analysis}, 59:1--26, 2008.

\bibitem{BalRen09}
G.~Bal and K.~Ren.
\newblock Physics-based models for measurement correlations: application to an
  inverse sturm-liouville problem.
\newblock {\em Inverse Problems}, 25:055006, 2009.

\bibitem{BK89}
H.T. Banks and K.~Kunisch.
\newblock {\em Estimation Techniqiues for Distributed Parameter Systems}.
\newblock Birkh\"{a}user, 1989.

\bibitem{lions}
A.~Bensoussan, J.-L. Lions, and G.~Papanicolaou.
\newblock {\em Asymptotic Analysis for Periodic Structures}, volume~5 of {\em
  Studies in Mathematics and its Applications}.
\newblock North-Holland Publishing Co., Amsterdam, 1978.

\bibitem{Berger}
J.O. Berger.
\newblock {\em Statistical Decision Theory and Bayesian Analysis}.
\newblock Springer, 1980.

\bibitem{bickel}
P.J. Bickel and K.A. Doksum.
\newblock {\em Mathematical Statistics}.
\newblock Prentice-Hall, 2001.

\bibitem{bog98}
V.I. Bogachev.
\newblock {\em {G}aussian Meausures}.
\newblock American Mathematical Society, 1998.

\bibitem{cioran}
D.~Cioranescu and P.~Donato.
\newblock {\em An Introduction to Homogenization}.
\newblock Oxford University Press, New York, 1999.

\bibitem{CDRS08}
S.L. Cotter, M.~Dashti, J.C. Robinson, and A.M. Stuart.
\newblock Bayesian inverse problems for functions and applications to fluid
  mechanics.
\newblock {\em Inverse Problems}, 25:115008, 2009.

\bibitem{Dac89}
B.~Dacarogna.
\newblock {\em Direct Methods in the Calculus of Variations}.
\newblock Springer, New York, 1989.

\bibitem{DPZ92}
G.~DaPrato and J.~Zabczyk.
\newblock {\em Stochastic Equations in Infinite Dimensions.}
\newblock Cambridge University Press, 1992.

\bibitem{DS10}
M.~Dashti and A.M. Stuart.
\newblock {\em In preparation}, 2010.

\bibitem{EHW2000}
Y.~R. Efendiev, Th.~Y. Hou, and X.-H. Wu.
\newblock Convergence of a nonconforming multiscale finite element method.
\newblock {\em SIAM J. Numer. Anal.}, 37(3):888--910 (electronic), 2000.

\bibitem{ehn96}
H.K. Engl, M.~Hanke, and A.~Neubauer.
\newblock {\em Regularization of Inverse Problems}.
\newblock Kluwer, 1996.

\bibitem{Fitzp91}
B.~Fitzpatrick.
\newblock Bayesian analysis in inverse problems.
\newblock {\em Inverse Problems}, 7:675--702, 1991.

\bibitem{HPUU09}
M.~Hinze, R.~Pinnau, M.~Ulbrich, and S.~Ulbirch.
\newblock {\em Optimization with PDE Constraints}, volume~23 of {\em
  Mathematics Modelling: Theory and Applications}.
\newblock Springer, 2009.

\bibitem{Kon86}
H.~K\"onig.
\newblock {\em Eigenvalue Distribution of Compact Operators.}
\newblock Birkh\"auser Verlag, Basel, 1986.

\bibitem{Lif95}
M.A. Lifshits.
\newblock {\em Gaussian Random Functions}, volume 322 of {\em Mathematics and
  its Applications}.
\newblock Kluwer, Dordrecht, 1995.

\bibitem{NP09}
J.~Nolen and G.~Papanicoulaou.
\newblock Inverse problems.
\newblock {\em Fine scale unertainty in parameter estimation for elliptic
  equations}, 25:115021, 2009.

\bibitem{NPS10}
J.~Nolen, G.A. Pavliotis, and A.M. Stuart.
\newblock {\em In preparation}, 2010.

\bibitem{APavlSt09}
A.~Papavasiliou, G.A. Pavliotis, and A.M Stuart.
\newblock Maximum likelihood estimation for multiscale diffusions.
\newblock {\em Stoch. Proc. and Applics.}, 2009.

\bibitem{PS08}
G.~A. Pavliotis and A.~M. Stuart.
\newblock {\em Multiscale Methods}, volume~53 of {\em Texts in Applied
  Mathematics}.
\newblock Springer, 2008.
\newblock Averaging and Homogenization.

\bibitem{PavlSt06}
G.A. Pavliotis and A.M Stuart.
\newblock Parameter estimation for multiscale diffusions.
\newblock {\em J. Stat. Phys.}, 127(4):741--781, 2007.

\bibitem{Stuart10}
A.M. Stuart.
\newblock Inverse problems: a {B}ayesian perspective.
\newblock {\em Acta Numerica}, 19, 2010.

\end{thebibliography}
%\bibliographystyle{plain}
\end{document}